\documentclass{amsart}

\usepackage[all]{xy}
\usepackage{hyperref}
\hypersetup{
    colorlinks=true,       
    linkcolor=blue,          
    citecolor=blue,        
    filecolor=blue,      
    urlcolor=blue           
}
\usepackage{tikz}
\usetikzlibrary{graphs,positioning,arrows,shapes.misc,decorations.pathmorphing}

\tikzset{
    >=stealth,
    every picture/.style={thick},
    graphs/every graph/.style={empty nodes},
}

\tikzstyle{vertex}=[
    draw,
    circle,
    fill=black,
    inner sep=1pt,
    minimum width=5pt,
]
\usepackage[position=top]{subfig}
\usepackage[backrefs,lite]{amsrefs}
\usepackage{ amssymb}
\usepackage{todonotes}
\usepackage{stmaryrd}
\usepackage{tikz-cd}

\newcommand{\supp}{\operatorname{supp}}

\newcommand{\Ch}{\operatorname{Ch}}

\newcommand{\qq}{\mathbb{Q}}
\newcommand{\zz}{\mathbb{Z}}

\newcommand{\rr}{\mathbb{R}}
\newcommand{\cc}{\mathbb{C}}

\newtheorem{introthm}{Theorem}

\newtheorem{theorem}{Theorem}[section]
\newtheorem{lemma}[theorem]{Lemma}

\newtheorem{claim}[theorem]{Claim}

\DeclareRobustCommand{\rchi}{{\mathpalette\irchi\relax}}
\newcommand{\irchi}[2]{\raisebox{\depth}{$#1\chi$}} 

\theoremstyle{definition}

\newtheorem{notation}[theorem]{Notation}
\newtheorem{definition}[theorem]{Definition}
\newtheorem{example}[theorem]{Example}

\newtheorem{remark}[theorem]{Remark}

\theoremstyle{remark}

\numberwithin{equation}{section}

\begin{document}

\title{Bounding Singular Surfaces Via Chern Numbers}

\author[J.~Moraga]{Joaqu\'in Moraga}
\address{
Department of Mathematics, University of Utah, 155 S 1400 E, 
Salt Lake City, UT 84112}
\email{moraga@math.utah.edu}
\subjclass[2010]{Primary 14E30, 
Secondary 14J17. 
}
\thanks{ The author was partially supported by NSF research grants no: DMS-1300750, DMS-1265285 
and by a grant from the Simons Foundation; Award Number: 256202}

\maketitle

\medskip 

\begin{abstract}
We prove the existence of a bound on the number
of steps of the minimal model program for singular surfaces 
in terms of discrepancies and top Chern numbers.
As an application, we prove that given $R\in \mathbb{R}_{>0}$ and $\epsilon \in (0,1)$, 
the class $\mathcal{F}(R,\epsilon)$ of $2$-dimensional pairs $(X,D)$ of general type
with $\epsilon$-klt singularities, $D$ with standard coefficients, and $4c_2(X,D)-c_1^2(X,D)\leq R$, forms a bounded family.
\end{abstract}

\setcounter{tocdepth}{1}
\tableofcontents

\section*{Introduction}

The aim of this article is to prove that a class
of complex algebraic singular log surfaces of general type 
forms a bounded family whenever the Chern numbers
and the discrepancies are bounded. 
In order to prove such statement we study 
a topological invariant for singular surfaces which strictly
decreases under divisorial contractions and we prove that
this invariant is discrete and non-negative in such sets.
We will use techniques of the minimal model program, abbreviated as MMP, and generalizations of the
Bogomolov-Miyaoka-Yau inequality, or BMY for short.

It is known that given a smooth surface,
after finitely many contractions of $(-1)$-curves, 
we arive to either a ruled surface or a surface with semiample canonical divisor.
One of the main purposes of the minimal model program is to generalize this picture
for possibly singular higher dimensional pairs. 
Many important steps of the MMP have been achieved, 
for example for $2$-dimensional log pairs 
there are explicit classifications of the singularities of the MMP (see ~\cite{Flips} and ~\cite[Chapter 4]{KM98})
and we have the boundedness of minimal models by Alexeev (see ~\cite{Ale94} and ~\cite{AM00}).
However, not much is known about which invariants of an algebraic variety 
can bound the numbers of steps of its MMP.
In this article we intend to prove a result in this direction for singular surfaces.

One of the main ingredients that we use is the Bogomolov-Miyaoka-Yau inequality: 
Bogomolov proved in ~\cite[Theorem 5]{Bog78} that for a smooth projective surface $X$
the inequality $4c_2(X)\geq c_1(X)^2$ holds,
then Miyaoka ~\cite[Theorem 4]{Miy77} and Yau ~\cite[Theorem 4]{Yau77} improved such inequality
to $3c_2(X)\geq c_1(X)^2$. 
It is known that this latter inequality is sharp
and for surfaces of general type the equality holds
if and only if $X$ is isomorphic to the quotient of the two dimensional ball
by an infinite discrete group.
Some further generalizations for singular surfaces with boundary with standard coefficients
were obtained, for example, 
by Sakai~\cite[Theorem 7.6]{Sak80}, Miyaoka~\cite[Theorem 1.1]{Miy84} 
and Megyesi~\cite[Theorem 0.1]{Meg99}.
Finally, Langer proved in ~\cite[Corollary 0.2]{Lan01} 
a more general inequality for log canonical surfaces over the complex numbers. 

Now we turn to state the precise result of this article.
In what follows we consider normal projective varieties over the field of complex numbers $\cc$.
Given two real numbers $R\in \rr_{>0}$ and $\epsilon \in (0,1)$ we denote by
$\mathcal{F}(R,\epsilon)$ the class
of $2$-dimensional pairs $(X,D)$ of general type with
$\epsilon$-klt singularities, 
such that $D$ has {\em standard coefficients},
and $4c_2(X,D)-c_1(X,D)^2 \leq R$. 

\begin{introthm}\label{maintheorem}
Let $ R \in \rr_{>0}$ and $\epsilon \in (0,1)$.
Then the class $\mathcal{F}(R,\epsilon)$
forms a bounded family.
\end{introthm}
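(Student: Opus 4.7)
The plan is to combine the minimal model program with Alexeev's boundedness theorem for log canonical models. Given $(X,D) \in \mathcal{F}(R,\epsilon)$, I would first run a $(K_X+D)$-MMP to obtain a minimal model $(X_{\min}, D_{\min})$. Because $(X,D)$ is of general type, this MMP does not end in a Mori fiber space, and by surface theory we also obtain the log canonical model. The key idea is to use the invariant $\beta(X,D) := 4c_2(X,D) - c_1(X,D)^2$ both to bound the number of MMP steps and to bound the volume of $(X_{\min},D_{\min})$.

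For the first bound, I would prove that $\beta$ is non-negative on $\mathcal{F}(R,\epsilon)$ (an immediate consequence of Langer's log-BMY inequality) and that, on this class, $\beta$ takes values in a discrete subset of $[0,R]$. Discreteness should follow from the $\epsilon$-klt condition together with the standardness of the coefficients of $D$, which together pin down the local Chern-class contributions of the singularities to a discrete set of rationals. Next I would show that $\beta$ strictly decreases under each divisorial contraction in a $(K_X+D)$-MMP, with the decrement bounded below by a positive constant depending only on $\epsilon$. Combined with the discreteness, this would give a uniform bound $N=N(R,\epsilon)$ on the number of steps of any $(K_X+D)$-MMP starting from a pair in $\mathcal{F}(R,\epsilon)$.

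For boundedness of the minimal model, I would combine $4c_2(X,D) - c_1(X,D)^2 \leq R$ with Langer's inequality $c_1(X,D)^2 \leq 3c_2(X,D)$ to get $c_1(X,D)^2 \leq 3R$. Since Chern numbers do not decrease under divisorial contractions of general type pairs, the log volume $\mathrm{vol}(K_{X_{\min}}+D_{\min}) = (K_{X_{\min}}+D_{\min})^2$ is uniformly bounded above. Standard coefficients form a DCC set and the $\epsilon$-klt condition is preserved under the MMP, so Alexeev's boundedness theorem places all $(X_{\min},D_{\min})$ in a single bounded family. It then remains to invert the MMP: at each of the at most $N(R,\epsilon)$ inverse steps, one extracts an extremal curve whose data is constrained by $\epsilon$-klt-ness and by the standardness of the boundary. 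An inverse induction on the number of MMP steps, using the explicit classification of extremal contractions on $\epsilon$-klt surface pairs, then shows that $\mathcal{F}(R,\epsilon)$ is itself bounded.

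The main obstacle is the quantitative strict decrease of $\beta$ under a divisorial contraction. One must compare how $c_1^2$ and $c_2$ of the log pair transform under contraction of a $(K+D)$-negative curve, accounting both for the numerical change in intersection and for the local Chern-class contribution of the new singularity. The $\epsilon$-klt condition should force discrepancies to stay uniformly bounded away from $-1$, which in turn bounds the local invariants; making this quantitative, and verifying that the resulting decrement of $\beta$ is uniformly bounded below in $\epsilon$, is the heart of the proof and is precisely the topological invariant behavior mentioned in the introduction.
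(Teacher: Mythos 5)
Your overall strategy is the same as the paper's (bound the number of MMP steps via the invariant $4c_2-c_1^2$, bound the minimal models via Langer plus Alexeev, then recover the whole class), but the step you describe as "discreteness should follow from the $\epsilon$-klt condition together with the standardness of the coefficients" is precisely where the argument breaks, and your proposed fix does not work as stated. The class of $\epsilon$-klt surface singularities with standard boundary is \emph{infinite} for every $\epsilon\in(0,1)$: chains of $(-2)$-curves of arbitrary length (the $A_n$ points) are canonical, hence $\epsilon$-klt, and their local contributions to $c_2$ and to $\delta$ (e.g.\ the orbifold terms $1-\tfrac1{r(x)}$ with $r(x)=n+1$) accumulate. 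Consequently the local Chern contributions do \emph{not} lie in a discrete set, the decrement $\Ch(f)$ of a single divisorial contraction is not a priori bounded below by a constant depending only on $\epsilon$, and discrepancies being bounded away from $-1$ does not rescue this, since the problem is the difference of two nearby non-discrete contributions rather than the size of any single one. The paper's proof exists to close exactly this gap: it shows (Claims~\ref{bound}, \ref{nondec} and \ref{baskets}) that along the MMP of a pair with $\Ch\leq R$ only finitely many baskets of singularities can occur, because a long chain of $(-2)$-curves created by the contraction would force the integer $\mu(f)$, hence $\Ch(f)$, to exceed $R$, while chains already present on the minimal model are bounded since $\mathcal{F}_{\rm min}(R,\epsilon)$ is a bounded family. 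The resulting lower bound $s$ for $\Ch(f)$ therefore depends on both $R$ and $\epsilon$, not on $\epsilon$ alone; any correct proof along your lines must use the upper bound $R$ in establishing the discreteness, and you would need to supply an argument of this combinatorial type.

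Two smaller points. First, your deduction of the volume bound is muddled: $c_1^2$ \emph{increases} under each contraction, so bounding $c_1(X,D)^2\leq 3R$ on $X$ does not by itself bound $(K_{X_{\min}}+D_{\min})^2$; you should instead use that $\Ch$ decreases along the MMP together with BMY applied on the minimal model (or Langer's inequality in the form $4c_2(X,D)-c_1^2(X,D)\geq\tfrac13 c_1(X_{\min},D_{\min})^2$, as the paper does). Second, the final passage from "bounded minimal models plus boundedly many steps" to boundedness of $\mathcal{F}(R,\epsilon)$ is glossed: the inverse steps extract divisors whose possible configurations must themselves range in a finite set (this again uses the finiteness of baskets), and one still needs a mechanism such as the paper's bounding $\qq$-polarization descended from the minimal log resolutions, or Alexeev's Lemma~3.7/3.8, to conclude boundedness of the singular pairs.
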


See ~\ref{standard} for the definition of standard coefficients, 
and ~\ref{bounded} for the definition of bounded family. 
The idea of the proof is to 
investigate a topological invariant introduced by Megyesi in ~\cite{Meg99} and study how it changes for divisorial contractions of $\epsilon$-klt pairs.
We prove that for elements in $\mathcal{F}(R,\epsilon)$ such invariant takes values on a finite subset of $\mathbb{R}_{>0}$
which only depends on $R$ and $\epsilon$.
As a consequence we deduce the existence of a
bound, which only depends on $R$ and $\epsilon$, for the number of steps of the minimal model program of a member of  $\mathcal{F}(R,\epsilon)$.
Using this latter fact we will conclude that the class $\mathcal{F}(R,\epsilon)$ forms a bounded family.

\subsection*{Acknowledgements}
The author would like to thank Christopher Hacon and
Valery Alexeev for many useful comments.

\section{Preliminaries}

In this section we recall some usual definitions
from birational geometry and Chern classes, 
and we state some preliminary results
that will be used in the proof of the main theorem.
We often use the standard notation of
~\cite{Har77}, ~\cite{KM98} and ~\cite{Koll13}. 

\begin{definition}\label{boundedschemes}
A {\em pair of schemes} is a couple $(X,Z)$ where $Z$ is a sub-scheme of $X$. 
A class $\mathcal{C}$ of pair of schemes $\{(X,Z)\}$ is said to be a {\em bounded family}
if there exists three Noetherian schemes of finite type over the complex numbers $\rchi,\mathcal{Z}$ and $T$,
with $\mathcal{Z}$ is a subscheme of $\rchi$, 
and a morphism of schemes $\phi \colon \rchi \rightarrow T$,
such that for any pair $(X,Z)\in \mathcal{C}$, there exists a closed point $t\in T$, and an isomorphism
$\rchi_t \simeq X$ which induces an isomorphism $\mathcal{Z}_t \simeq Z$.
\end{definition}

\begin{definition}\label{standard}
A {\em $2$-dimensional pair} is a couple $(X,D)$ where
$X$ is a normal projective surface, 
$D$ is a $\qq$-divisor whose coefficients belong to $[0,1]$ 
and $K_X+D$ is a $\qq$-Cartier divisor.
In this article we deal with pairs with {\em standard coefficients}
meaning that the coefficients of $D$ belong to the set
\[ \left\{ 1-\frac{1}{m} \mid m\in \zz_{\geq 1}\cup \{\infty\} \right\}.\]
We say that a pair $(X,D)$ is of {\em general type} if $K_X+D$ is a big $\qq$-divisor.
A {\em resolution of singularities} of a pair $(X,D)$ is a proper
birational morphism $f\colon Y \rightarrow X$ such that $Y$ is a smooth surface,
we say that the resolution is a {\em log resolution} if 
$f^{-1}_*D_{\rm red} \cup E$ has simple normal crossing singularities,
where $E$ is the exceptional locus of $f$ with reduced scheme structure
and $D_{\rm red}$ stands for the reduced divisor supported on $\supp(D)$.
We will say that a resolution $f$ of $X$ is {\em minimal} if any other resolution 
of $X$ dominates $f$.
It is known that any $2$-dimensional pair has a minimal resolution of singularities.
\end{definition}

\begin{definition}\label{bounded}
We say that a class $\mathcal{C}$ of $2$-dimensional log pairs $\{ (X,D)\}$ is {\em bounded}
(or {\em forms a bounded family}), if the class of pairs of schemes $\{ (X,D_{\rm red})\}$ is bounded in the sense of Definition~\ref{boundedschemes},
and the coefficients of the boundaries $D$ belong to a finite set.
As usual, $D_{\rm red}$ is the divisor $D$ with reduced scheme structure.
\end{definition}

\begin{definition}\label{qpolarization}
Given a class $\mathcal{C}$ of $2$-dimensional log pairs $\{ (X,D)\}$,
a {\em $\qq$-polarization} on $\mathcal{C}$ is a class of ample $\qq$-Cartier $\qq$-divisors
$A_X$ on $X$ for each $(X,D) \in \mathcal{C}$.
We say that a {\em $\qq$-polarization} $\mathcal{A}$ is a {\em bounding $\qq$-polarization}
if there exists positive real numbers $C$ and $C'$, and $N\in \mathbb{Z}_{\geq 1}$, such that the following conditions hold
for every $(X,D)\in \mathcal{C}$:
\begin{itemize}
\item $NA_X$ is a Cartier Weil divisor, 
\item $A_X^2 \leq C$, and 
\item $A_X\cdot D \leq C'$.
\end{itemize}
By~\cite[Lemma 3.7]{Ale94}, we know that a class of $2$-dimensional log pairs with a bounding $\qq$-polarization
forms a bounded family in the sense of Definition~\ref{bounded}.
\end{definition}

\begin{definition}\label{def1}
Let $(X,D)$ be a $2$-dimensional pair 
and let $x\in X$.
Consider $f\colon Y \rightarrow X$ to be a log resolution of $(X,D)$, 
so we can write 
\begin{equation}\label{eq}
K_Y  = f^*(K_X+D) + \sum_{i\in I} a_i E_i,
\end{equation}
where $\{ E_i \mid i\in I\}$ is a finite family of distinct divisors,
we denote by $I_x\subseteq I$ the set of divisors whose image in $X$ pass through $x$
and by $I'_x$ the subset of $I_x$ consisting of $f$-exceptional divisors.
We call the $a_i$'s the {\em discrepancies} of $x$ with respect to $f$.
A point $x\in X$ is said to be 
\begin{itemize}
\item {\em $\epsilon$-kawamata log terminal} or {\em $\epsilon$-klt} if $a_i> \epsilon-1$, for every $i\in I_x$.
\item {\em $\epsilon$-purely log terminal} or {\em $\epsilon$-plt} if $a_i>\epsilon-1$, for every $i\in I'_x$.
\end{itemize}
If $\epsilon=0$, then we just omit $\epsilon$ from the notation.
We say that $(X,D)$ is $\epsilon$-klt (resp. $\epsilon$-plt) if all its points
are $\epsilon$-klt (resp. $\epsilon$-plt).
The divisor $D$ is often called the {\em boundary} of the pair.
\end{definition}

\begin{remark}
Kawamata log terminal singularities of dimension $2$, with trivial boundary, are quotient singularities (see, e.g.,~\cite{Tsu83}). 
\end{remark}

\begin{notation}\label{blowupgraphs}
For any klt surface singularity $x$ in $(X,D)$ 
we consider $f\colon Y \rightarrow X$ to 
be the minimal resolution of $x$, 
$E_x$ to be the exceptional locus with reduced scheme structure 
and $D_x$ to be the union of the local analytic branches of $D$
passing through $x\in X$.
We construct a weighted graph $G(X,D;x)$ as follows:
The vertices of $G(X,D;x)$ correspond to the irreducible components
of $f^{-1}_*D_{x} \cup E_x$, 
we do not associate weights to the curves in $f^{-1}_*D_{x}$,
the weight of a curve in $E_x$ is its negative self-intersection
and two vertices will be joined by and edge if such curves intersect.
As usual, the weight of a vertex $v$ will be denoted by $w(v)$.
We just write $G(x)$ instead of $G(X,D;x)$, when the pair $(X,D)$ is clear from the context.

In the proof of the main theorem we will blow up smooth centers
in the minimal resolution of  klt singular points,
to each blow up we can associate a new weighted graph as follows: 
\begin{itemize}
\item 
Blowing up a vertex of $G(x)$:\\ 
Given a vertex $v\in G(x)$, the graph obtained by {\em blowing up $v$}
is $G(x)$ enlarged with a new vertex $v_0$ of weight one which is joined to $v$,
and $w(v)$ is increased by one whenever $v$ is a weighted vertex. 
This new graph represents the non-minimal resolution of $x$ in $(X,D)$
obtained by blowing up a center which is contained in the curve corresponding to
$v$ and is not contained in any other irreducible curve of $f^{-1}_*D_x \cup E_x$.
\item 
Blowing up an edge of $G(x)$: \\
Given an edge $e\in G(x)$, the graph obtained by {\em blowing up $e$}
is $G(x)$ enlarged with a new vertex $v_0$ of weight one
which is joined to the ends of $e$,
the weights of the ends of $e$ are increased by one whenever such vertices are weighted
and $e$ is removed.
Observe that the graph representing the non-minimal resolution of $x$ in $(X,D)$
obtained by blowing up the smooth point corresponding to the edge $e$ 
may differ from this graph at most by some edges from the vertices of $D_x$.
\end{itemize}
\end{notation}

In what follows we turn to recall the definition 
of Chern classes
of a klt pair $(X,D)$ used
in ~\cite{Lan00},~\cite{Lan01} and ~\cite{Lan16}.

\begin{definition}
Given a $2$-dimensional pair $(X,D)$ we define
the first Chern class to be
\[
c_1(X,D) = c_1( \mathcal{O}_X(K_X+D) ),
\]
and $c_1^2(X,D)$ will denote the self-intersection of $K_X+D$.
We consider a finite morphism $f\colon Y\rightarrow X$
such that $f^*D$ is a Weil divisor,
and then we define the second Chern class to be
\[
c_2(X,D) = c_2(Y,\widehat{ f^*\Omega_X }(\log(D)))/\deg(f),
\]
where the wide hat stands for the reflexivization of the sheaf
$ f^*\Omega_X (\log(D))$.
See ~\cite[Definition-Proposition 2.9]{Wahl93}  for
the definition of Chern classes of rank $2$ reflexive sheaves
on normal surfaces.
Whenever we work over pairs with trivial boundary we
will just write $c_1^2(X)$ and $c_2(X)$ 
to denote the usual Chern classes.
\end{definition}

\begin{remark}
In ~\cite[Definition 1.8]{Meg99}
there is a definition of $c_2(X,D)$ as an orbifold Euler number,
using such definition we can compute $c_2(X)$ in terms of
the order of the {\em local fundamental group} of the singularities
of $X$. Indeed, we can write
\[
c_2(X) = \chi_{\rm top}(X) - \sum_{x\in X_{\rm sing}} \left( 1- \frac{1}{r(x)}\right)
\]
where $r(x)$ is the order of the local fundamental group of $X$ at $x$.
In such formula we let the summand on the right to be one whenever the local 
fundamental group is infinite, which happens if and only if $x\in X$ is not klt.
\end{remark}

\begin{example}
Consider $X_{n}$ to be the normalized blow up
of $\cc^2$ defined by the ideal $\langle y,x^n \rangle$. 
This surface has a unique singular $A_n$ point
whose local fundamental group has order equal to $n$
and $\chi_{\rm top}(X_n)=2$, so we obtain
\[ c_2(X_n) = 1+\frac{1}{n}. \]
\end{example}

\begin{definition}
A birational contraction $f\colon (X,D) \rightarrow (X',D')$ of
$2$-dimensional klt pairs, is said to be {\em $(K_X+D)$-negative} if
the $\qq$-Cartier $\qq$-divisor
$-(K_X+D)$ is relatively nef over $X'$. 
\end{definition}

\begin{definition}
Given a $2$-dimensional klt pair $(X,D)$
we say that it has a minimal model $(X_{\rm min},D_{\rm min})$
if there exists a birational contraction $f\colon X \rightarrow X_{\rm min}$
such that $f_*D=D_{\rm min}$, the $\qq$-divisor $K_{X_{\rm min}}+D_{\rm min}$ is
$\qq$-Cartier and nef, and $f$ is $(K_X+D)$-negative.
\end{definition} 

The following theorem is well-known, see for example ~\cite[Theorem 3.48]{KM98}.

\begin{theorem}\label{existence}
Let $(X,D)$ be a $2$-dimensional $\epsilon$-klt pair 
such that $K_X+D$ is pseudo-effective.
Then the minimal model $(X_{\rm min},D_{\rm min})$ of $(X,D)$ exists, 
and has $\epsilon$-klt singularities.
The birational morphism $f\colon X \rightarrow X_{\rm min}$ 
can be factoreds into (finitely many) divisorial contractions
of $\epsilon$-klt pairs.
Moreover, any $(K_X+D)$-negative birational contraction factors $f$.
\end{theorem}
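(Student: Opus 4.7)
The plan is to run the $(K_X+D)$-minimal model program on $(X,D)$. Because $\dim X = 2$ there are no flips to consider, and because $K_X+D$ is pseudo-effective no step can be a Mori fibration; hence every step will be a divisorial contraction. Concretely, as long as $K_X+D$ is not nef, the cone and contraction theorems for klt surface pairs yield a $(K_X+D)$-negative extremal ray $R$ together with its extremal contraction $f_R\colon X\to X_1$ onto a normal projective surface. Pseudo-effectivity of $K_X+D$ rules out the fiber-type case, and since every extremal contraction of a surface contracts at least a curve, $f_R$ must be divisorial, with a single irreducible exceptional curve $E$.

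Next I verify that the $\epsilon$-klt property is preserved. Writing $K_X+D=f_R^{*}(K_{X_1}+D_1)+aE$ with $D_1=(f_R)_*D$, intersecting with $E$ and using the relation $(K_X+D)\cdot E<0$ forces $a>0$. For any prime divisor $F$ on a common log resolution $\pi\colon Y\to X$, a direct computation of discrepancies yields
\[
a(F;X_1,D_1)=a(F;X,D)+a\cdot\ord_F(\pi^{*}E)\geq a(F;X,D),
\]
so every discrepancy of $(X_1,D_1)$ is at least as large as the corresponding discrepancy of $(X,D)$, and $(X_1,D_1)$ remains $\epsilon$-klt. Termination is now immediate: each divisorial contraction strictly decreases the Picard number by one, so after finitely many iterations one arrives at a pair $(X_{\min},D_{\min})$ with $K_{X_{\min}}+D_{\min}$ nef. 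This is the desired minimal model, and the induced map $f\colon X\to X_{\min}$ is by construction a composition of divisorial contractions of $\epsilon$-klt pairs.

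For the last assertion, let $g\colon X\to Z$ be any $(K_X+D)$-negative birational contraction. Since every curve contracted by $g$ has negative intersection with $K_X+D$, the exceptional locus of $g$ sits inside the exceptional locus of $f$; the rigidity lemma then supplies a unique morphism $h\colon Z\to X_{\min}$ with $f=h\circ g$. The main technical step is the preservation of $\epsilon$-klt singularities in the second paragraph, which rests on the discrepancy formula above (a manifestation of the negativity lemma); the remaining assertions---termination, impossibility of a fiber-type step, and universality---then follow from standard arguments in two-dimensional birational geometry.
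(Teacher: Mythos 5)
Your argument is correct, and it is essentially the standard proof behind the result: the paper itself gives no proof of Theorem~\ref{existence}, quoting it as well known and citing ~\cite[Theorem 3.48]{KM98}, and your steps (cone and contraction theorem for klt surface pairs, exclusion of fiber-type contractions by pseudo-effectivity, discrepancy monotonicity to preserve $\epsilon$-klt, termination by the Picard number, rigidity for the factorization) are exactly what that citation packages. One point to flag in your last paragraph: the paper's definition of a $(K_X+D)$-negative contraction only asks that $-(K_X+D)$ be relatively nef, so a contracted curve may be $(K_X+D)$-trivial, whereas you assume strict negativity on every contracted curve; with the weaker definition the ``moreover'' clause can actually fail (e.g.\ contracting a $(K_X+D)$-trivial curve on the minimal model itself is then a ``negative'' contraction that $f=\id$ does not factor through), so your strict reading is the intended one, but the discrepancy with the stated definition should be made explicit rather than silently assumed.
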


The following theorem is a version of ~\cite[Corollary 0.2]{Lan01}
or ~\cite[Theorem 0.1]{Meg99}
that we will use in this article. 

\begin{theorem}\label{langer}
Let $(X,D)$ be a $2$-dimensional klt pair 
such that $K_X+D$ is pseudo-effective 
and let $(X_{\rm min},D_{\rm min})$ be its minimal model,
then the following inequality holds:
\[
4c_2(X,D) - c_1^2(X,D) \geq \frac{1}{3}c_1(X_{\rm min},D_{\rm min})^2.
\]
\end{theorem}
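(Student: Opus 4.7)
The plan is to reduce to the minimal model $(X_{\rm min}, D_{\rm min})$, where the standard BMY inequality for klt surfaces applies, while tracking how the invariant $4c_2 - c_1^2$ behaves along the intermediate divisorial contractions.

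First I would apply Theorem~\ref{existence} to factor the birational map as a finite chain of $(K+D)$-negative divisorial contractions between klt pairs
\[
(X,D) = (X_0, D_0) \to (X_1, D_1) \to \cdots \to (X_n, D_n) = (X_{\rm min}, D_{\rm min}).
\]

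The heart of the argument is the monotonicity inequality
\[
4c_2(X_i, D_i) - c_1^2(X_i, D_i) \;\geq\; 4c_2(X_{i+1}, D_{i+1}) - c_1^2(X_{i+1}, D_{i+1})
\]
at each step. For the self-intersection, let $C_i$ denote the contracted curve and write $K_{X_i}+D_i = f_i^*(K_{X_{i+1}}+D_{i+1}) + a_i C_i$; the negativity lemma together with $(K_{X_i}+D_i)\cdot C_i < 0$ forces $a_i > 0$, whence $c_1^2(X_i, D_i) = c_1^2(X_{i+1}, D_{i+1}) + a_i^2 C_i^2 < c_1^2(X_{i+1}, D_{i+1})$, so $c_1^2$ strictly increases under each $f_i$. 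For $c_2$, the Megyesi orbifold description of $c_2$ as a topological Euler number minus contributions $1 - 1/r(x)$ at klt points allows a local comparison of $c_2(X_i, D_i)$ with $c_2(X_{i+1}, D_{i+1})$ driven by the Euler characteristic of $C_i$ and by the change in local orbifold contributions at the image singularity on $X_{i+1}$. A case analysis on the dual graphs of the klt surface singularities meeting $C_i$, whose smooth prototype is the blow-down of a $(-1)$-curve where $c_2 \mapsto c_2 - 1$, $c_1^2 \mapsto c_1^2 + 1$, and hence $4c_2 - c_1^2$ drops by $5$, establishes the desired inequality $4\Delta c_2 \leq \Delta c_1^2$.

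Finally, on the minimal model $K_{X_{\rm min}} + D_{\rm min}$ is nef, so Langer's BMY inequality~\cite[Corollary 0.2]{Lan01} applied to $(X_{\rm min}, D_{\rm min})$ yields $3c_2(X_{\rm min}, D_{\rm min}) \geq c_1^2(X_{\rm min}, D_{\rm min})$, whence
\[
4c_2(X_{\rm min}, D_{\rm min}) - c_1^2(X_{\rm min}, D_{\rm min}) \;\geq\; \tfrac{1}{3} c_1^2(X_{\rm min}, D_{\rm min}).
\]
Combining this with the monotonicity inequalities proves the theorem. The main obstacle is the monotonicity step, particularly the case where $C_i$ passes through existing singularities of $X_i$ and is contracted to a possibly worse klt point of $X_{i+1}$: the change in the local orbifold contributions to $c_2$ must then be balanced against the change in $(K+D)^2$, and I expect this to rely on the classification of klt surface singularities and on dual-graph manipulations in the spirit of Notation~\ref{blowupgraphs}.
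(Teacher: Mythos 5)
Your overall architecture is legitimate: factor $X\dashrightarrow X_{\rm min}$ into $(K+D)$-negative divisorial contractions via Theorem~\ref{existence}, show that $4c_2-c_1^2$ does not increase at each step, and apply the nef-case BMY inequality on the minimal model; this does yield the stated inequality. Be aware, however, that the paper gives no proof of Theorem~\ref{langer} at all: it is quoted as a version of \cite[Corollary 0.2]{Lan01} (or \cite[Theorem 0.1]{Meg99}). In fact Langer's Corollary 0.2 is already the pseudo-effective statement, so no MMP induction is needed: writing the Zariski decomposition $K_X+D=P+N$ with $P\cdot N=0$ and $N^2\le 0$, one has $c_1^2(X,D)=P^2+N^2\le P^2=c_1^2(X_{\rm min},D_{\rm min})$, while Langer gives $3c_2(X,D)\ge P^2$, and the two lines combine to the desired inequality. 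Your route, which applies the BMY inequality only on the minimal model and transports it back step by step, is the more laborious reassembly of the same result.

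The genuine gap is the monotonicity step, which you assert rather than prove. For a contraction whose curve passes through klt singularities and is contracted to a possibly worse klt point, the inequality $4\Delta c_2\le \Delta c_1^2$ is exactly the statement $\Ch(f_i)\ge 0$, i.e.\ Megyesi's local BMY inequality \cite[Theorem 4.2]{Meg99} (equivalently \cite[Corollary 0.1]{Lan01}), which the paper records in Notation~\ref{totalchern} and Lemma~\ref{diff}. Its proof is a substantive computation over the cyclic and platonic baskets (orbifold Euler numbers, the contributions $\delta(x)$, the change of $c_1^2$ through the minimal resolutions), and the smooth $(-1)$-curve prototype you invoke, where all local fundamental groups are trivial, does not touch the difficulty; saying that a case analysis on dual graphs establishes it leaves the entire content of the step unproved. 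Either cite \cite[Theorem 4.2]{Meg99} for this step (which closes your argument, but then Corollary 0.2 of \cite{Lan01} already subsumes the whole proof), or you must actually carry out that local analysis. A secondary caveat: the orbifold description of $c_2$ you rely on requires standard coefficients, whereas Theorem~\ref{langer} is stated for arbitrary klt boundaries, so in that generality you should work with Langer's Chern classes of reflexive sheaves rather than Megyesi's orbifold Euler numbers.
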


Now we turn to recall the classification
of klt surface singularities.

\begin{definition}
In this definition given a singular point $x$ in $(X,D)$
we will denote by $f\colon Y\rightarrow X$ its minimal resolution 
with reduced exceptional locus $E_x$,
and $D_x$ the sum of the local 
analytic branches of $D$ passing through $x$.\\

We say that a surface singularity $x$ in $(X,D)$ is 
{\em of type $(n,q;m_1,m_2)$},
where $n,q,m_1$ and $m_2$ are positive integers, 
if the following conditions hold:
\begin{itemize}
\item $G(x)$ is a tree whose vertices have degree at most two,
\item $D_x$ consists of two curves of coefficients $1-\frac{1}{m_1}$ and $1-\frac{1}{m_2}$ 
and the vertices corresponding to $f^{-1}_* D_x$ have degree one,
\item $E_x$ has simple normal crossing singularities, 
its irreducible components are smooth rational curves 
and the sequence of weights of $E_x$ in $G(x)$
equals the sequence realizing $n/q$ as a continued fraction.
\end{itemize}

In other words, the exceptional set of $f$
is a chain of $k$ smooth rational curves with self-intersections
$-b_1, \dots, -b_k$, with $b_i\geq 2$, 
\[
\frac{n}{q}= b_1 -\frac{1}{b_2 -\frac{1}{b_3-\dots}}
\qquad 
\text{ and }
\qquad
(n,q)=1.
\]
Observe that in this case
the strict transform of $D_x$,
if non-trivial, 
will correspond to one or both ends of $G(x)$.
The above fractional expression is denoted by $[b_1,\dots, b_k]$.
Singularities of type $(n,q;m_1, m_2)$ are called {\em cyclic},
and these singularities are klt if and only if
$m_1$ and $m_2$ are finite.
We indicate by $q'$ the positive integer number such that
$n/q'=[b_k,\dots, b_1]$ holds.
Given a surface singularity $x$ in $(X,D)$ of type
$(n,q;m_1, m_2)$ we denote by
\[
\delta(x)= \frac{q}{nm_1^2} + \frac{q'}{nm_2^2} - \frac{2}{nm_1m_2}
\left( 1 + nm_1+nm_2 \right)
\]
the number that we call 
the {\em contribution} of the singularity $x$.
We omit the entry of $m_i$ whenever $m_i=1$.
Moreover, a smooth point will be considered to be a cyclic point of
type $(1,0)$.
For singularities of type $(1,0;m_1,m_2)$ we formally define
\[
E_x^2=-2  \quad \text{ and } \quad \delta(x)= -2 - \frac{4}{m_1m_2}.
\]
The above formal definitions are conventions for the equalities of Remark~\ref{change} to hold.
We say that a surface singularity $x$ in $(X,D)$ 
is of {\em type} \[(b,(n_1,q_1;m_1),(n_2,q_2;m_2),(n_3,q_3;m_3))\]
if the following conditions hold:
\begin{itemize}
\item $G(x)$ is a tree with three branches and a central vertex of weight $b\geq 2$,
each branch of $G(x)$ corresponds to the weighted graph of a singularity of 
type $(n_i,q_i;m_i)$, with $m_in_i\geq 2$, for $i\in \{1,2,3\}$,  
\item $D_x$ consists of three curves of coefficients $1-\frac{1}{m_1}, 1-\frac{1}{m_2}$
and $1-\frac{1}{m_3}$, and
\item $E_x$ has simple normal crossing singularities 
and its irreducible components are smooth rational curves.
\end{itemize}
Such singularities will be called {\em platonic}.
Given a platonic singular point $x$ in $(X,D)$ we denote by
\[
\delta(x) = \sum_{i=1}^3 \frac{q'_i}{n_im_i^2} - \sum_{i=1}^3 \frac{2}{m_i} +2
\]
the number that we call the
{\em contribution} of the singularity $x$.
Observe that for a cyclic or platonic singularity $x$ of a klt surface $(X,D)$ we have that $\delta(x)\in [-5,6]$.
\end{definition}

\begin{definition}
A {\em basket of singularity} is the data of the graph of the minimal resolution, together 
with the intersection matrix of the exceptional curves of the resolution. 
Recall that surface klt singularities are rigid, in the sense
that they are uniquely determined by the minimal resolution, up to analytic local isomorphism.
\end{definition}

The following theorem gives a characterization
of klt surface singularities in terms of
the baskets of singularities just defined. 
See for example
~\cite[Theorem 3.1 and Appendix]{Kob90}
and ~\cite[Theorem 1.6]{Meg99}.

\begin{theorem}\label{class}
Let $(X,D)$ be a $2$-dimensional klt pair,
such that $D$ has standard coefficients.
Then any point $x$ in $(X,D)$ is either
\begin{itemize}
\item cyclic with $m_1$ and $m_2$ finite, or  
\item platonic with $m_1,m_2$ and $m_3$ finite, and $\sum_{i=1}^3 \frac{1}{n_im_i} >1$.
\end{itemize} 
\end{theorem}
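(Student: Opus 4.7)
The plan is to bootstrap the classical classification of klt surface quotient singularities, recalled in the remark following Definition~\ref{def1}, by analyzing what the boundary $D$ contributes. Let $f\colon Y \to X$ be the minimal log resolution of $(X,D)$ at $x$ with exceptional set $E_x=\bigcup_i E_i$, and consider the discrepancy equation~\eqref{eq}. The klt condition translates to $a_i>-1$ for every exceptional $E_i$, while the standard-coefficient hypothesis forces every component of $D_x$ to have coefficient $1-\tfrac{1}{m}$ with $m\in\zz_{\geq 1}\cup\{\infty\}$. Since $E_x$ supports a negative definite intersection matrix by Mumford, the vector $(a_i)$ is uniquely determined by the linear system obtained by intersecting~\eqref{eq} against each $E_i$.

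First I would invoke the remark to reduce the shape of the dual graph of $E_x$ (forgetting $D$) to one of two cases: a chain of smooth rational $(-b_i)$-curves with $b_i\geq 2$, or a tree with a unique trivalent central vertex $v_0$ of weight $-b$ and three chain-arms realizing continued fractions $n_i/q_i=[b_{i,1},\ldots,b_{i,k_i}]$. Then I would show that every branch of $D_x$ is smooth, meets $E_x$ transversally, and attaches at a single smooth point lying on an end vertex of a chain or arm: any other configuration either contradicts minimality of $f$ or produces an exceptional divisor of discrepancy $\leq -1$, violating klt. In the cyclic case the chain has two ends, giving the type $(n,q;m_1,m_2)$; in the platonic case the three arms have three free ends, giving the type $(b,(n_i,q_i;m_i))$. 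Finiteness of each $m_j$ is then forced, because $m_j=\infty$ corresponds to a reduced component of $D$ with coefficient $1$, which pushes the discrepancy of the adjacent exceptional curve down to $-1$.

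The main obstacle is extracting the explicit numerical inequality $\sum_{i=1}^{3}\tfrac{1}{n_i m_i}>1$ in the platonic case from the klt condition at the central vertex. To derive it I would intersect~\eqref{eq} with $v_0$, use the standard continued-fraction identity stating that the log discrepancy along an arm $[b_{i,1},\ldots,b_{i,k_i}]$ propagates with rate controlled by $q'_i/n_i$, and simplify the resulting rational expression in $b,n_i,q_i,m_i$. The klt inequality $a_{v_0}>-1$ then rearranges, after combining the three arm-contributions, into precisely $\sum_{i=1}^{3}\tfrac{1}{n_im_i}>1$; the auxiliary condition $m_in_i\geq 2$ comes automatically from the minimality of $f$ together with each boundary coefficient being strictly less than $1$. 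The combinatorial bookkeeping of continued fractions is the only nontrivial point; everything else follows formally from Mumford's negativity and Brieskorn's classification.
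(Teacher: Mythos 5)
The paper offers no proof of Theorem~\ref{class} at all: it is quoted from the literature (\cite[Theorem 3.1 and Appendix]{Kob90}, \cite[Theorem 1.6]{Meg99}), so you are attempting to reconstruct that classification from scratch. Your sketch contains a genuinely false step: the claim that every branch of $D_x$ must attach at an \emph{end} vertex of a chain or arm of $E_x$, any other configuration contradicting minimality of $f$ or producing a discrepancy $\leq -1$. Concretely, let $x\in X$ be an $A_3$-singularity, with exceptional chain $E_1,E_2,E_3$ of $(-2)$-curves on the minimal resolution $f\colon Y\to X$, and let $D=\left(1-\tfrac{1}{m}\right)D_1$ with $m\geq 2$ finite, where $f^{-1}_*D_1$ is smooth and meets the \emph{middle} curve $E_2$ transversally at one point away from $E_1\cup E_3$. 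Since $K_Y=f^*K_X$ and $f^*D_1=f^{-1}_*D_1+\tfrac12 E_1+E_2+\tfrac12 E_3$, the log pull-back of $K_X+D$ has simple normal crossing support with all coefficients strictly less than $1$, so $(X,D)$ is klt with standard coefficients, yet the boundary branch is attached to an interior vertex of the chain. In the paper's terminology this point is \emph{platonic} (dihedral type): central vertex $E_2$ of weight $2$, two arms of type $(2,1)$, and a third arm of type $(1,0;m)$ consisting of the boundary branch alone, with $\tfrac12+\tfrac12+\tfrac1m>1$. Your dichotomy ``$E_x$ a chain $\Rightarrow$ cyclic, $E_x$ a fork $\Rightarrow$ platonic, $D$ attached only at free ends'' therefore proves a false intermediate lemma and misses exactly the platonic points in which a boundary branch plays the role of an entire arm ($n_i=1$), or in which the fork structure only appears after $D_x$ is incorporated into $G(x)$.

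The correct version of your strategy must classify the weighted graph of $E_x$ \emph{together with} the attached boundary vertices, not $E_x$ alone: starting from the fact that $X$ itself is a quotient singularity (chain or three-armed star), one has to determine at which vertices a branch of coefficient $1-\tfrac1m$ may be attached and with what resulting degrees, by solving the linear system against the negative definite intersection matrix; this is where one finds that degree-$3$ vertices are allowed only under the platonic constraints (e.g.\ a branch with $m\geq 2$ through the central curve of a $D_4$-configuration already gives discrepancy $\leq -1$), and that no vertex of degree $4$ or two forks can occur. Your remaining points are fixable but also imprecise: $m_j=\infty$ violates klt immediately because the branch itself is a divisor through $x$ with discrepancy $-1$ (no adjacent exceptional curve is needed, e.g.\ at a smooth point), and the inequality $\sum_i \tfrac{1}{n_im_i}>1$ must be shown to follow from, and not merely be analogous to, the discrepancy condition at the central vertex once all arm types, including $n_i=1$ arms, are admitted. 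As it stands, the proposal does not establish the theorem.
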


\begin{notation}\label{totalchern}
Given a klt pair $(X,D)$
we denote by 
\[{\rm Ch}(X,D)= 4c_2(X,D)-c_1^2(X,D)\] 
and call this number the {\em Chern value} of the pair.
Observe that for a $2$-dimensional pair such that $K_X+D$ is pseudo-effective the
Chern value is non-negative. Indeed, by Theorem~\ref{langer},
we have that 
\[{\rm Ch}(X,D) = 4c_2(X,D)-c_1^2(X,D) \geq \frac{1}{3}c_1^2(X_{\text{min}},D_{\text{min}})\geq 0,\] 
where $(X_{\text{min}},D_{\text{min}})$ is the minimal model of $(X,D)$.
Moreover, for any birational contraction
$f\colon (X,D) \rightarrow (X',D')$ of
$2$-dimensional klt pairs
we have that 
\[
{\rm Ch}(f) := {\rm Ch}(X,D)- {\rm Ch}(X',D') > 0
\] 
(see ~\cite[Theorem 4.2]{Meg99}).
This inequality, 
which in ~\cite{Meg99} is called a {\em local version}
of BMY inequality, 
can also be deduced from asymptotic Riemann-Roch 
and the positivity of the modified Euler characteristic (see ~\cite[Corollary 0.1]{Lan01}).
Observe that ~\cite[Theorem 4.2]{Meg99} states the above inequality 
for log canonical pairs and the inequality is not strict, 
however in ~\cite[Page 274]{Meg99} the author points out that the 
equality for log canonical pairs implies that $m=\infty$,
therefore the inequality is strict for klt pairs.
Finally, observe that for any member $(X,D)$ of $\mathcal{F}(R, \epsilon)$
we have that $R \geq {\rm Ch}(X,D)$ by definition.
As before, if the boundary is trivial, 
we just write $\Ch(X)$ for the Chern value of $X$.
\end{notation}

\begin{remark}
It is clear that whenever we contract a curve
in the minimal model program of a smooth surface $X$
the Chern value drops by $5$. Therefore, 
we can trivially bound the numbers of steps of the MMP
of a smooth surface of general type by $\lceil \Ch(X)/5 \rceil$.
Nevertheless, in the singular case, for a divisorial contraction
$f\colon X \rightarrow X'$ the difference $\Ch(f)$ can be arbitrarily small
if we do not impose conditions on the singularities, see Example~\ref{failsklt}.
\end{remark}

In what follows we introduce further notation
that will be used in the proof of the main theorem.

\begin{notation}\label{notation}
Let $f\colon (X,D) \rightarrow (X',D')$ be a divisorial contraction of 
$2$-dimensional klt pairs,
let $C$ be the curve being contracted by $f$,
$x_0$ the image of $C$
and $x_1,\dots, x_k$ the singular points of $X$ contained in $C$.
We denote by $\widetilde{X'}$ the minimal log resolution of $x_0$
and by $\widetilde{X}$ the minimal log resolution of the points
$x_1,\dots, x_k$.
Thus, we have a commutative diagram 
\[
 \xymatrix{
\widetilde{X}\ar[r]^-{\pi} \ar[d]^-{\widetilde{f}} & X\ar[d]^-{f} \\
\widetilde{X}'\ar[r]^-{\pi'} & X' 
 }
\]
where $\widetilde{f}$ factors into blow ups of smooth centers and
we denote by $\nu(f)$ such number of blow ups.
Observe that $\nu(f)>0$ if and only if 
the strict transform of $C$ in $\widetilde{X}$ is a $(-1)$-curve.
We often write $\widetilde{C}$ for the strict transform of $C$ in $\widetilde{X}$, $c(f)$ for the negative self-intersection of $\widetilde{C}$ 
and $m(f)$ for the positive integer number such that $C$ has coefficient $1-\frac{1}{m(f)}$ in $D$.

Moreover, we will write $\widetilde{f}= h_1 \circ \dots \circ h_{\nu(f)}$ 
for the factorization of $\widetilde{f}$ into blow ups $h_i$ of smooth centers. 
We will denote by $\widetilde{g}_i$ the birational contraction $h_1 \circ \dots  \circ h_i$,
and by $g_i$ the birational morphism obtained from $\pi \circ h_1\circ \dots \circ h_i$ by contracting all 
the curves of the codomain which are exceptional over $X'$ except the exceptional curve of $h_i$.
Therefore, we have the following relation $\widetilde{g}_{i+1}=\widetilde{g}_i \circ h_{i+1}$, 
for any $i\in \{0,\dots, \nu(f)-1\}$, where $\widetilde{g}_0={\rm id}_{\widetilde{X}'}$.
Recall from Notation~\ref{blowupgraphs} that every blow-up $h_i$, with $i\in \{1,\dots, \nu(f)\}$, 
corresponds to the blow-up of a vertex or an edge of the graph $G(X',D';x_0)$.
We shall use the above notation every time that we consider
a divisorial contraction of klt pairs.
\end{notation}

\begin{remark}\label{change}
Using Notation~\ref{notation}, for a divisorial contraction 
$f\colon (X,D)\rightarrow (X',D')$ of $2$-dimensional klt pairs 
we have the following equality 
\[
c_2(X,D) - c_2(X',D') = \frac{2-k}{m(f)} - \frac{1}{r(x_0)} + \sum_{i=1}^k \frac{1}{r(x_i)},
\]
which follows from the definition of the second Chern number
as an orbifold Euler number (see ~\cite[Theorem 4.2]{Meg99}).
Moreover, for a klt surface singularity $x$ in $(X,D)$
the following holds
\[
E_{x}^2 - \delta(x) = \frac{4}{r(x)} + \left( c_1^2(Y,f^{-1}_*D)-c_1^2(X,D)\right),
\]
where $f\colon Y \rightarrow X$ is the minimal resolution of $x$ in $(X,D)$.
Using the above equalities one can compute the value of
$\Ch(f)$ in terms of the data defined in Notation~\ref{notation},
as in the proof of ~\cite[Theorem 4.2]{Meg99}. 
In what follows we will introduce some further notation
in order write $\Ch(f)$ in a more compact way in Lemma~\ref{diff}.
\end{remark}

\begin{notation}
Consider a point $x$ in $(X,D)$ which is smooth 
and is contained in two analytic branches of $D_x$
with coefficients $1-\frac{1}{m_1}$ and $1-\frac{1}{m_2}$,
that intersect transversally at $x$.
We define
\[
\gamma(x)=-2\left( 1-\frac{1}{m_1}\right)\left( 1-\frac{1}{m_2} \right),
\]
and $\gamma(x)=0$ in any other case.
\end{notation}

\begin{notation}
Given a divisorial contraction as in ~\ref{notation}, we summarize the notation
in the following formulas:
\begin{align*}
\mu(f)  & =  \nu(f) - E_{x_0}^2 +   \sum_{i=1}^k E_{x_i}^2 ,  \\
\delta(f) & =  \delta(x_0) - \sum_{i=1}^k \delta(x_i), \\
M(f) & =  4\left( \frac{m(f)-k+1}{m(f)} \right) +c(f)\left( \frac{1-m(f)^2}{m(f)^2}\right),\\
\gamma(f) & =  \sum_{i=1}^k \gamma(x_i). 
\end{align*}
The above quantities signify the following: 
\begin{itemize}
\item $\mu(f)$ is the contribution given by $c_1(X,D)^2-c_1(X',D')^2$ to $\Ch(f)$,
\item $\delta(f)$ is the correction to $\Ch(f)$ produced by the singularities,
\item $M(f)$ is the contribution to $\Ch(f)$ given by the coefficient of $C$,
\item and $\gamma(f)$ is the correction term introduced when an irreducible curve of $D$
intersects $C$ transversally.
\end{itemize}
Recall that we have a relation $\widetilde{g}_{i+1}=\widetilde{g}_i \circ h_{i+1}$ 
for $i\in \{0,\dots, \nu(f)-1\}$, and $g_{\nu(f)}=f$. 
In Claim~\ref{nondec}, we will compute $\mu(f)$ by inductively computing 
$\mu(g_i)$ for $i\in \{1,\dots, \nu(f)\}$.
\end{notation}

\section{Proof of Boundedness}

In this section we prove the main theorem.
We start by giving some examples in which the theorem 
fails when the assumptions are weakened:

\begin{example}\label{failsklt}
If we assume the singularities of the pairs in $\mathcal{F}(R,\epsilon)$
to be $\epsilon$-plt instead of $\epsilon$-klt, then
Theorem~\ref{maintheorem} does not hold.
Indeed, let $X$ be any smooth surface of general type
and consider the set 
$\{ (X_m,D_m) \mid m\in \zz_{\geq 1}\}$
of birational models of $X$  
constructed inductively as follows:
Once we have constructed $(X_m,D_m)$ there is a birational morphism
$f_m\colon X_m \rightarrow X$  and we define
$X_{m+1}$ to be the blow up of $X_m$ at a smooth point
not contained in the exceptional locus of $f_m$
and we let $D_{m+1}$ to be the exceptional divisor 
of $\psi_m \colon X_{m+1}\rightarrow X_m$ with coefficient 
$ 1- 1/2^{m+1}$, meaning that
\[
D_{m+1}= \left( 1-\frac{1}{2^{m+1}}\right) {\rm Exc}(\psi_{m}).
\]
To start the induction we set $(X_1,D_1)=(X,0)$.
Observe that all such models are $\frac{1}{2}$-plt.
We can compute
\[
\Ch(\psi_m) = \Ch(X_{m+1},D_{m+1}) - \Ch(X_{m},D_{m}) = \frac{1}{2^{m-1}}+\frac{1}{2^{2m+2}}.
\]
Indeed, we have that
\[
\mu(\psi_m) = 1 \quad \delta(\psi_m) = -4 + \frac{1}{2^{m-1}} \quad M(\psi_m)=3+\frac{1}{2^{2m+2}} \quad  \gamma(\psi_m)=0
\]
and by Lemma~\ref{diff}, the equality 
\[
\Ch(\psi_m)=\mu(\psi_m)+\delta(\psi_m)+M(\psi_m)+\gamma(\psi_m)
\] 
holds.
So, the set
$\{ (X_m,D_m) \mid m\in \zz_{\geq 1}\}$
contains pairls of arbitrarily big Picard number
and all such models satisfy
\[
\Ch(X_n,D_n) \leq \Ch(X)+3.
\]
Thus, imposing the conditions on the discrepancies
of the divisors which are not $f$-exceptional is essential in Definition~\ref{def1}.
\end{example}

\begin{example}
For a birational contraction of klt pairs 
the quantity $-c^2_1(X,D)$ will always decrease,
however the second Chern number can increase.
Thus, imposing an upper bound for the second Chern number
is not enough to obtain boundedness of singular surfaces.
For instance, consider a surface $X'$ of general type
and blow up a smooth point $x_0$, then a point in the exceptional 
curve and finally the intersection of the two exceptional curves,
we obtain a new model $\widetilde{X}$ which is obtained from
$X'$ by blowing up two vertices and one edge
in $G(x_0)$. 
Let $X$ be the singular surface obtained by contracting
the $(-2)$-curve and the $(-3)$-curve of $\widetilde{X}$,
then we have a divisorial contraction $X\rightarrow X'$ 
that contracts the image of the $(-1)$-curve of $\widetilde{X}$
to the smooth point $x_0$ of $X'$. 
Using Remark~\ref{change} we can see that 
$c_2(X') = c_2(X) + 1/6$.
Inductively we can produce a set of $\frac{1}{3}$-klt surfaces containing
models of arbitrarily big Picard number and bounded second Chern number.
\end{example}

Now we turn to give a proof of the main theorem.
The strategy will be as follows: 
First we prove that the pairs in the set $\mathcal{F}(R,\epsilon)$
have finitely many possible baskets of singularities, 
then we use such result to prove that the MMP of any member
of this set has bounded number of steps, 
this latter fact plus the boundedness of minimal models
allows us to conclude the proof.

\begin{definition}
For a positive integer $L$ we define  
$\mathcal{S}(L, \epsilon)$ to be the set of graphs
of singularities of $2$-dimensional $\epsilon$-klt pairs,
such that any connected
subgraph all whose weights are equal to $2$ has at most $L$ vertices.
We will also write $\mathcal{S}(L, \epsilon)$ for the corresponding
set of baskets of klt surface singularities.
\end{definition}

\begin{lemma}\label{fin}
Let $\epsilon \in (0,1)$ and $L$  be a positive integer,
then the set $S(L,\epsilon)$ is finite.
\end{lemma}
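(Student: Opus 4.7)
The plan is to apply the classification in Theorem \ref{class}, which reduces the question to cyclic baskets $(n,q;m_1,m_2)$ and platonic baskets $(b,(n_j,q_j;m_j)_{j=1,2,3})$, and then, in each case, to bound (a) the multiplicities $m_j$, (b) the weights $b_i$ of the exceptional curves in the minimal resolution, and (c) the number of exceptional components. Combined with the rigid shape forced by the classification (a chain, or a tree with three branches around a central vertex), these three bounds leave only finitely many weighted graphs, hence only finitely many baskets in $\mathcal{S}(L,\epsilon)$.

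First I would bound the $m_j$: a branch of $D_x$ with coefficient $1-1/m_j$ is a prime divisor on $X$ with discrepancy $-(1-1/m_j)$, so the $\epsilon$-klt condition $-(1-1/m_j)>\epsilon-1$ forces $m_j<1/\epsilon$. Next I would bound the weights. Writing $\alpha_i=a_i+1\in(\epsilon,1]$ for the log discrepancy of $E_i$ in the minimal resolution, the adjunction equality $K_Y\cdot E_i=b_i-2$ together with the identity $K_Y+f^{-1}_*D=f^*(K_X+D)+\sum a_iE_i$ produces the three-term recurrence
\[
b_i\alpha_i=\alpha_{i-1}+\alpha_{i+1}
\]
along each chain, with boundary values $\alpha_0,\alpha_{k+1}\in\{1/m_1,1/m_2\}$, and the analogous relation $b\alpha_0=\alpha_{1,1}+\alpha_{2,1}+\alpha_{3,1}$ at the central vertex of a platonic graph. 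Since $\alpha_j\leq 1$ for any exceptional divisor in the minimal resolution of a klt surface singularity, these identities immediately force $b_i\leq 2/\epsilon$ and $b\leq 3/\epsilon$.

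The crucial step is bounding the number of exceptional components. The recurrence rewrites as
\[
\alpha_{i+1}-2\alpha_i+\alpha_{i-1}=(b_i-2)\alpha_i\geq 0,
\]
so $(\alpha_i)$ is convex along each chain. Telescoping gives
\[
\sum_{i=1}^k(b_i-2)\alpha_i=(\alpha_{k+1}-\alpha_k)-(\alpha_1-\alpha_0)\leq 2,
\]
and each index with $b_i\geq 3$ contributes strictly more than $\epsilon$ to the left-hand side, so at most $2/\epsilon$ of the weights in the chain exceed $2$. Combining this with the hypothesis that every maximal run of weights equal to $2$ has length at most $L$ bounds the chain length by an explicit function of $L$ and $\epsilon$; applying the same argument to each of the three branches then bounds the size of a platonic tree.

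The main obstacle will be the book-keeping at the endpoints of chains and at the central vertex of a platonic graph, where the boundary value of $\alpha$ depends on whether the adjacent vertex is a branch of $D_x$ or another exceptional component, and verifying that $\alpha_j\leq 1$ holds uniformly so the telescoping estimate is tight. Once these bounds are in place, only finitely many weighted graphs of chain or tri-branched-tree shape satisfy all the constraints, so $\mathcal{S}(L,\epsilon)$ is finite.
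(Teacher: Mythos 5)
Your proposal is correct, but it takes a genuinely different route from the paper. The paper's proof is essentially a citation: it invokes Alexeev's result (\cite[Lemma 3.3]{Ale93} or \cite[Theorem 5.2]{Ale94}) giving a constant $N(\epsilon)$ with $\sum_v \left( w(v)-2 \right)\leq N(\epsilon)$ for every $\epsilon$-klt surface singularity; this simultaneously bounds the weights $\geq 3$ and their number, and then $L$ controls the runs of weight-$2$ vertices, so only finitely many weighted graphs occur. You instead reprove the needed input from scratch: the adjunction recurrence $b_i\alpha_i=\alpha_{i-1}+\alpha_{i+1}$ for the log discrepancies on the minimal resolution, together with $\epsilon<\alpha_i\leq 1$ (the upper bound being the standard fact that discrepancies are non-positive on a minimal resolution, and only decrease after adding an effective boundary), yields $b_i\leq 2/\epsilon$, and your telescoping of $\sum_i (b_i-2)\alpha_i\leq 2$ bounds the number of weights $\geq 3$ per chain by $2/\epsilon$ — which is exactly an explicit, chain-wise version of the Alexeev bound the paper quotes. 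What your approach buys is a self-contained argument with explicit constants ($m_j<1/\epsilon$, $b_i\leq 2/\epsilon$, at most $2/\epsilon$ big weights per branch); what it costs is that you lean on the standard-coefficient classification (Theorem~\ref{class}) to fix the shapes (chain or three-branched tree with boundary vertices of degree one), whereas the cited bound holds for arbitrary $\epsilon$-klt graphs — harmless here, since in the paper the lemma is only applied to pairs with standard coefficients, and in any case the chain-or-fork shape of the exceptional graph is forced by $X$ having quotient singularities. Your endpoint book-keeping also works out as you anticipate: a missing boundary branch can be treated as $m=1$ (boundary value $\alpha=1$), and at the central vertex of a fork the relation $b\alpha_0=\alpha_{1,1}+\alpha_{2,1}+\alpha_{3,1}\leq 3$ gives $b\leq 3/\epsilon$, so the telescoping estimate applies branch by branch.
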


\begin{proof}
Given $\epsilon\in (0,1)$, 
by \cite[Lemma 3.3]{Ale93}
or \cite[Theorem 5.2]{Ale94},
we know that 
there exists a positive integer $N(\epsilon)$
such that $\sum_{v} \left( w(v)-2 \right) \leq N(\epsilon)$
for every singularity of a $2$-dimensional $\epsilon$-klt pair,
where the sum runs over all the vertices of $G(x)$ which are weighted.
Therefore, we have a bound on the sum of the weights which
are greater or equal to $3$ and $L$ gives a bound 
on the number of vertices with weight $2$, concluding the proof.
\end{proof}

\begin{lemma}\label{diff}
Using the notation of ~\ref{notation}.
Let $f\colon (X,D) \rightarrow (X',D')$ be a birational contraction of 
$2$-dimensional klt pairs.
Then we have that
\[
\Ch(f)= \mu(f) +\delta(f)+M(f)+\gamma(f)>0.
\]
\end{lemma}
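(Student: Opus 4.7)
The plan is to verify the identity $\Ch(f) = \mu(f)+\delta(f)+M(f)+\gamma(f)$ by a direct calculation using Remark~\ref{change} to reduce the problem to the smooth resolutions $\widetilde{X}$ and $\widetilde{X}'$ of Notation~\ref{notation}, and then to appeal to Megyesi's local BMY inequality for strict positivity.

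First I would write
$$\Ch(f) = 4\bigl(c_2(X,D)-c_2(X',D')\bigr) - \bigl(c_1^2(X,D)-c_1^2(X',D')\bigr)$$
and treat the two pieces separately. The difference of second Chern numbers is given directly by the orbifold Euler number formula of Remark~\ref{change}:
$$c_2(X,D)-c_2(X',D') = \frac{2-k}{m(f)} - \frac{1}{r(x_0)} + \sum_{i=1}^k \frac{1}{r(x_i)}.$$
For the difference of self-intersections I would apply the second identity of Remark~\ref{change} once at $x_0\in X'$ and once at each $x_i\in X$, which re-expresses $c_1^2(X,D)-c_1^2(X',D')$ as $c_1^2(\widetilde{X},\widetilde{D})-c_1^2(\widetilde{X}',\widetilde{D}')$ plus a local correction of the shape $E_{x_0}^2 - \sum_i E_{x_i}^2 - \delta(f) - 4/r(x_0) + 4\sum_i 1/r(x_i)$. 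After multiplying the $c_2$-formula by $4$ and subtracting, the contributions involving the local fundamental groups cancel exactly, and the combination $E_{x_0}^2 - \sum_i E_{x_i}^2$ together with the blow-up count $\nu(f)$ from $\widetilde{f}$ assembles into $\mu(f)$, while the $\delta(x_0)$ and $\delta(x_i)$ terms assemble into $\delta(f)$.

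The residual computation then takes place on the smooth surfaces $\widetilde{X}$ and $\widetilde{X}'$. Following the factorization $\widetilde{f}=h_1\circ\cdots\circ h_{\nu(f)}$ of Notation~\ref{notation}, each $h_i$ is a blow-up of a smooth point, so $K^2$ drops by $1$ at each step (this is the $-\nu(f)$ absorbed into $\mu(f)$), and the strict transform $\widetilde{C}$ of $C$, which appears in $\widetilde{D}$ with coefficient $1-1/m(f)$ but is absent from $\widetilde{D}'$, contributes its self-intersection $-c(f)$ weighted by $(1-1/m(f))^2$ together with cross terms coming from its transverse intersections with the other boundary components. Collecting the pieces produces the expression
$$M(f) = 4\cdot\frac{m(f)-k+1}{m(f)} + c(f)\cdot\frac{1-m(f)^2}{m(f)^2},$$
where the first summand comes from combining $4(2-k)/m(f)$ with the new boundary curve of coefficient $1-1/m(f)$; and whenever $C$ meets two transverse branches of the remaining boundary at one of the $x_i$, an additional cross term $-2(1-1/m_1)(1-1/m_2)$ appears whose total over $i$ is exactly $\gamma(f)$.

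Strict positivity of $\Ch(f)$ is not a new assertion here: it is recorded in Notation~\ref{totalchern} as the local BMY inequality ~\cite[Theorem 4.2]{Meg99}, which is stated for log canonical pairs with a non-strict inequality but strengthens to a strict one in the klt setting as observed in the same notation. The main obstacle I anticipate is the combinatorial bookkeeping for the blow-ups $h_i$ described in Notation~\ref{blowupgraphs}: an edge blow-up in $G(X',D';x_0)$ simultaneously modifies the weights of two adjacent vertices and relocates the attachment of the exceptional divisor, so one must keep careful track across the $\nu(f)$ steps of how the final self-intersection $-c(f)$ of $\widetilde{C}$ and its intersections with the neighbouring strict transforms in $\widetilde{D}$ are built up, in order to match the prescribed form of $M(f)$ and $\gamma(f)$.
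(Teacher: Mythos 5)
Your proposal is correct and follows essentially the same route as the paper, which simply cites the proof of ~\cite[Theorem 4.2]{Meg99}: the computation you sketch (the orbifold Euler number formula and the local $c_1^2$ correction of Remark~\ref{change}, with the $1/r(x)$ terms cancelling and the residual smooth-surface bookkeeping producing $\mu(f)$, $M(f)$ and $\gamma(f)$) is exactly the calculation carried out there, and the strict positivity is likewise taken from Megyesi together with the klt strengthening already recorded in Notation~\ref{totalchern}.
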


\begin{proof}
This follows from the proof of ~\cite[Theorem 4.2]{Meg99}.
\end{proof}

\begin{proof}[Proof of the main Theorem]
During the proof we will use Notation~\ref{notation}
every time that we consider a birational contraction $f$.
First, recall that by Theorem~\ref{langer} 
and Theorem~\ref{existence}
we have that the minimal model of any pair in $\mathcal{F}(R,\epsilon)$
exists, has volume $(K_{\rm min} + D_{\rm min})^2$ 
bounded by $3R$ and $\epsilon$-klt singularities.
We denote the class of these log minimal models 
by $\mathcal{F}_{\rm min}(R,\epsilon)$ and we
observe that by ~\cite[Theorem 7.7]{Ale94}
$\mathcal{F}_{\rm min}(R,\epsilon)$ forms a bounded family.
We will prove that there exists a constant $s>0$,
only depending on $R$ and $\epsilon$, 
such that for any birational contraction $f$ in the MMP
of a pair in $\mathcal{F}(R,\epsilon)$ 
the inequality $\Ch(f)\geq s$ holds.
In order to do so we start by proving the following claims.

\begin{claim}\label{bound}
There exists a positive real number $B$, 
which only depends on $R$ and $\epsilon$, 
such that $\mu(f)\leq B$
for every birational contraction $f$ 
of the MMP of a pair in $\mathcal{F}(R,\epsilon)$.
\end{claim}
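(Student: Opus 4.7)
The plan is to apply Lemma~\ref{diff}, which rewrites $\mu(f) = \Ch(f) - \delta(f) - M(f) - \gamma(f)$, and then control each summand on the right. First I would observe that along the MMP of any pair $(X,D) \in \mathcal{F}(R,\epsilon)$ the Chern value remains non-negative (Notation~\ref{totalchern}) and strictly decreases at every divisorial contraction; in particular every such contraction $f$ satisfies $\Ch(f) \leq \Ch(X,D) \leq R$. So it suffices to bound $-\delta(f) - M(f) - \gamma(f)$ above by a constant depending only on $R$ and $\epsilon$.

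Next I would collect the structural bounds on the other three terms. By Theorem~\ref{class}, every $\delta(x)$ for an $\epsilon$-klt point with standard coefficients lies in $[-5,6]$, and $\gamma(x) \in [-2,0]$ by definition. Alexeev's inequality $\sum_v (w(v)-2) \leq N(\epsilon)$, recalled in the proof of Lemma~\ref{fin}, yields $|E_x^2| \leq N(\epsilon)+2$ at every $\epsilon$-klt singular point. Finally, since $C$ has coefficient $1-1/m(f)$ and $(X,D)$ is $\epsilon$-klt, we must have $1/m(f) > \epsilon$, hence $m(f) \leq \lfloor 1/\epsilon \rfloor$.

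With these ingredients I would split into two cases. In the easy case $\nu(f) = 0$, the defining equality $\mu(f) = -E_{x_0}^2 + \sum_i E_{x_i}^2$ together with $\sum_i E_{x_i}^2 \leq -2k \leq 0$ immediately gives $\mu(f) \leq N(\epsilon)+2$. In the case $\nu(f) \geq 1$ we have $c(f) = 1$, so $M(f)$ becomes a function of $k$ and $m(f)$ alone. Combining the bounds of the previous paragraph with the positivity $\Ch(f) > 0$ of Notation~\ref{totalchern}, applied to the identity
\[
\delta(f) + M(f) + \gamma(f) = 4\!\left(\frac{2-k}{m(f)} - \frac{1}{r(x_0)} + \sum_{i=1}^k \frac{1}{r(x_i)}\right)
\]
from Remark~\ref{change}, should force a universal bound $k \leq k_0(R,\epsilon)$ on the number of singular points of $X$ lying on $C$. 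Once $k$ is bounded, $-\delta(f) - M(f) - \gamma(f)$ is bounded above by a constant $B'(R,\epsilon)$, and the claim follows with $B = R + B'(R,\epsilon)$.

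The principal obstacle is the extraction of the bound on $k$ when $\nu(f) \geq 1$: although each singular point $x_i$ contributes a controlled amount to $\delta(f)$ and $\gamma(f)$, the term $4(m(f)-k+1)/m(f)$ inside $M(f)$ changes sign with $k$, so the bound has to be squeezed out of a balanced combination of all four contributions using the strict inequality $\Ch(f) > 0$. I expect this to be the most delicate elementary calculation of the argument; the rest is bookkeeping with the formulas of Notation~\ref{notation} and the classification.
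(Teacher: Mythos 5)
Your opening move is the same as the paper's: write $\mu(f)=\Ch(f)-\delta(f)-M(f)-\gamma(f)$ via Lemma~\ref{diff}, bound $\Ch(f)\leq\Ch(X,D)\leq R$, and then bound the other three terms from below. The gap is exactly at the point you flag as the ``principal obstacle'': you never obtain the bound on $k$, and the route you propose for it does not work. The identity you invoke, $\delta(f)+M(f)+\gamma(f)=4\bigl(\tfrac{2-k}{m(f)}-\tfrac{1}{r(x_0)}+\sum_{i}\tfrac{1}{r(x_i)}\bigr)$, is not what Remark~\ref{change} says: the descriptions of $\mu(f),\delta(f),M(f),\gamma(f)$ as ``contributions'' are heuristic, and the decomposition does not split $\Ch(f)$ cleanly into a $c_1^2$-part equal to $\mu(f)$ and a $4c_2$-part equal to the remaining three terms --- the quantities $E_x^2$ and $\delta(x)$ each mix contributions of both kinds (this is precisely the content of the second displayed equation in Remark~\ref{change}, and of the formal conventions $E_x^2=-2$, $\delta(x)=-2-4/(m_1m_2)$ at smooth points). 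A concrete check: for the contraction of a $(-1)$-curve on a smooth surface with $D=0$, taking $k=2$ auxiliary smooth points as the conventions require, one computes $\mu(f)=-1$, $\delta(f)=6$, $M(f)=\gamma(f)=0$, so $\delta(f)+M(f)+\gamma(f)=6$, while $4\bigl(c_2(X)-c_2(X')\bigr)=4$ and $c_1^2(X')-c_1^2(X)=1\neq\mu(f)$; your identity fails. Even granting a corrected identity, you only assert that positivity of $\Ch(f)$ ``should force'' a bound $k\leq k_0(R,\epsilon)$; that is the heart of the matter and is left unproved.

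The paper avoids this entirely: $k\leq 3$ holds unconditionally, because $x_0$ is a klt surface point, so by the classification in Theorem~\ref{class} its weighted graph is a chain or a fork; the vertex corresponding to $\widetilde{C}$ in the blown-up graph has degree at most $3$, and the singular points $x_1,\dots,x_k$ of $X$ on $C$ correspond to the branches at that vertex. After adding smooth points to reach $k\in\{2,3\}$, and using $1\leq c(f)\leq 2/\epsilon$ (with $c(f)=1$ when $\nu(f)>0$, and Alexeev's bound on weights otherwise) together with $m(f)\geq 1$, the lower bounds $M(f)\geq -4-2/\epsilon$, $\delta(f)\geq$ a universal constant, and $\gamma(f)\geq -6$ are immediate, giving $B=R+31+2/\epsilon$. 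So the missing idea in your write-up is this combinatorial bound on $k$ coming from the classification of klt singularities; your $\nu(f)=0$ estimate of $\mu(f)$ via Alexeev's inequality $\sum_v(w(v)-2)\leq N(\epsilon)$ is correct but becomes unnecessary once $k$ is bounded.
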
 

\begin{claim}\label{nondec}
Let $f$ be as in Notation~\ref{notation}.
Then the inequality  $\mu(f)\geq -3$ holds,
and $\mu(g_{i+1})\geq \mu(g_i)$ for every $i\in \{1,\dots, \nu(f)-1\}$. 
Moreover, we have that 
\[
\mu(g_{i+1})= \mu(g_i)+1
\]
whenever $h_{i+1}$ corresponds to the blow-up of a vertex of degree one and weight one.
\end{claim}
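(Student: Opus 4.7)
The plan is to proceed by induction on $i$, reducing $\mu(g_{i+1}) - \mu(g_i)$ to a local computation about the effect of $h_{i+1}$ on the weighted graph. Since $\nu(g_{i+1}) = \nu(g_i) + 1$ and $E_{x_0}^2$ is independent of $i$, the problem reduces to understanding how the sum $\sum_l E_{x_l^{(j)}}^2$ changes.

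First, I would identify the singular points of $Y_j$ on $C_j$ with the connected components of the weighted subgraph $W_j$ of $G_j$ obtained by removing the vertex $E^{(j)}$; by connectedness of $(\pi'\circ\widetilde{g}_j)^{-1}(x_0)$, every such component is adjacent to $E^{(j)}$ in $G_j$. Using the formula $E^2 = -\sum_\alpha b_\alpha + 2\,|\mathrm{edges}|$ for the self-intersection of a tree of smooth rational curves with prescribed weights, together with the easy fact that contracting a $(-1)$-leaf preserves this quantity, I would derive the identity
\[
\sum_l E_{x_l^{(j)}}^2 \;=\; E_{W_j}^2 + b_{E^{(j)}} - 2\,d_{E^{(j)}},
\]
where $b_{E^{(j)}}$ and $d_{E^{(j)}}$ denote the weight and the weighted degree of $E^{(j)}$ in $G_j$.

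Next, I would compute the change in $E_{W_j}^2$ under $h_{j+1}$ using the modification rules of Notation~\ref{blowupgraphs}: a vertex blow-up leaves $E_{W_j}^2$ unchanged (the $+1$ on the blown-up vertex's weight, the new weight-$1$ vertex, and the new edge combine to $-1-1+2=0$), whereas an edge blow-up decreases $E_{W_j}^2$ by $1$. Combined with the observation that $E^{(j+1)}$ always has weight $1$ in $G_{j+1}$ and weighted degree at least $1$ (since every blow-up center lies over the $\pi'$-exceptional locus above $x_0$), this yields explicit expressions for $\mu(g_{j+1}) - \mu(g_j)$ in each subcase. A case analysis then shows the difference is always $\geq 0$, and equals exactly $+1$ when $h_{j+1}$ corresponds to the blow-up of a weight-$1$ degree-$1$ vertex, establishing the monotone part of the claim.

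Finally, for the bound $\mu(f) \geq -3$, monotonicity reduces the question to bounding $\mu(g_1)$, which depends only on $h_1$. Since $\widetilde{X}'$ is the minimal log resolution of $x_0$, all weighted vertices of $G(x_0)$ have weight $\geq 2$, and a direct case analysis on $h_1$ (vertex versus edge blow-up, using the classification of klt surface singularities of Theorem~\ref{class} to control $-E_{x_0}^2$) gives $\mu(g_1) \geq -3$. The main obstacle is the bookkeeping of the weight and weighted degree of $E^{(j)}$ across successive blow-ups: the increment $\mu(g_{j+1}) - \mu(g_j)$ depends not only on $h_{j+1}$ but also on the residual data from $h_j$, so this information must be tracked carefully through the induction.
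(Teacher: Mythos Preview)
Your overall strategy---reduce $\mu(g_{i+1})-\mu(g_i)$ to a local graph computation and then do a case analysis on whether $h_{i+1}$ is a vertex or an edge blow-up---is precisely what the paper does. The paper simply carries out the four cases (vertex/edge at a weight-one vertex of degree one or two) by hand rather than via a closed formula, and separately computes $\mu(g_1)$ for the two possible shapes of $h_1$.

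There is, however, a genuine gap in your bookkeeping. You identify the points $x_l^{(j)}$ with the connected components of $W_j$, i.e.\ with the \emph{singular} points of $Y_j$ lying on $C_j$. But in the definition of $\mu$ the sum $\sum_{i=1}^{k}E_{x_i}^2$ runs over a \emph{fixed} number $k$ of points, and by the convention set up just before Remark~\ref{change} a smooth point contributes $E_x^2=-2$. Since the weighted degree $d_{E^{(j)}}$ (equivalently, the number of genuinely singular $x_l^{(j)}$) can jump from step to step, your version of $\mu$ is not the one in the paper, and it is not monotone. Concretely, take $h_{i+1}$ to be the blow-up of the unique edge at a weight-one degree-one vertex $E^{(i)}$: before the blow-up there is one singular point on $C_i$, afterwards $E^{(i+1)}$ has weighted degree two and there are two. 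With your formula one finds an increment of $-2$, contradicting the claimed nonnegativity. The paper avoids this by keeping $k=2$ throughout and letting one of the $x_l$ be smooth when the degree drops to one; in that case the smooth $x_2$ already contributes $-2$ before the blow-up, and the increment is $0$, as stated.

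If you build the smooth-point convention into your identity, it simplifies to
\[
\sum_{i=1}^{k} E_{x_i^{(j)}}^2 \;=\; E_{G_j}^2 + b_{E^{(j)}} - 2k,
\]
where $E_{G_j}^2=-\sum_v w(v)+2\,|\text{edges}|$ is taken over the full weighted graph at stage $j$. With $k$ fixed and $b_{E^{(j)}}=1$ for $j\geq 1$, the increment is $1+(E_{G_{j+1}}^2-E_{G_j}^2)$, which equals $+1$ for any vertex blow-up and $0$ for any edge blow-up, recovering exactly the paper's four cases in one line.
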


\begin{claim}\label{baskets}
There exists a positive integer $L$, only depending 
on $R$ and $\epsilon$, such that 
the singularities of any model appearing in the MMP
of a pair in $\mathcal{F}(R,\epsilon)$ belong to $\mathcal{S}(L,\epsilon)$.
\end{claim}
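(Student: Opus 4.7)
The plan is to reduce the claim to bounding the singularity graphs of every pair in $\mathcal{F}(R,\epsilon)$, and then to induct backward along the MMP using Claims \ref{bound} and \ref{nondec}, starting from the bounded minimal model given by Alexeev.

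First I would observe that every intermediate model $(X_i,D_i)$ of the MMP of $(X,D)\in\mathcal{F}(R,\epsilon)$ itself belongs to $\mathcal{F}(R,\epsilon)$. Indeed, by Theorem~\ref{existence} each $(X_i,D_i)$ remains $\epsilon$-klt, the divisorial contractions preserve the general-type condition and the standard coefficients, and by Notation~\ref{totalchern} the Chern value satisfies $0\leq \Ch(X_i,D_i)\leq \Ch(X,D)\leq R$ since $\Ch(f_j)>0$ at each MMP step. Thus it is enough to produce $L=L(R,\epsilon)$ such that every singularity of every member of $\mathcal{F}(R,\epsilon)$ has graph in $\mathcal{S}(L,\epsilon)$. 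Combined with Lemma~\ref{fin} and the bound $\sum_v(w(v)-2)\leq N(\epsilon)$ from \cite[Lemma 3.3]{Ale93} used inside Lemma~\ref{fin}, only the length of a maximal connected $(-2)$-subgraph in each singularity graph remains to be bounded.

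For the base case I would invoke Alexeev's boundedness result \cite[Theorem 7.7]{Ale94}: the class $\mathcal{F}_{\min}(R,\epsilon)$ is bounded, so the singularities of $X_N=X_{\min}$ all belong to some $\mathcal{S}(L_0,\epsilon)$ for $L_0=L_0(R,\epsilon)$. For the inductive step I would consider a contraction $f\colon X_i\to X_{i+1}$ in the MMP with $x_0\in X_{i+1}$ the image of $C$ and $x_1,\ldots,x_k\in X_i$ the singular points on $C$, as in Notation~\ref{notation}. The singularities of $X_i$ away from $C$ are isomorphic to those of $X_{i+1}$ and so inherit the inductive bound. For the $x_j$ on $C$, I would combine Claim~\ref{bound} and Claim~\ref{nondec} to conclude $\mu(g_i)\in[-3,B]$ for every $i\in\{1,\ldots,\nu(f)\}$; then, using the identity
\[
\mu(f) \;=\; \nu(f)-E_{x_0}^2+\sum_{j=1}^{k} E_{x_j}^2,
\]
this constrains $\nu(f)$ and $\sum_j|E_{x_j}^2|$ in terms of $|E_{x_0}^2|$, which is bounded by the inductive hypothesis. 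The combinatorial rule of Notation~\ref{blowupgraphs} describes how the graphs $G(x_j)$ are obtained from $G(x_0)$ by the sequence $h_1,\ldots,h_{\nu(f)}$, and the final clause of Claim~\ref{nondec} says that each blow-up of a degree-one weight-one vertex contributes exactly $+1$ to $\mu$, so at most $B+3$ such blow-ups appear in the sequence.

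The hardest part will be the combinatorial bookkeeping that matches each potential lengthening of a $(-2)$-chain in passing from $G(x_0)$ to $G(x_j)$ against a strict $+1$ jump of $\mu$: when an $h_i$ blows up a weight-one vertex of degree one, the vertex acquires weight two and a new weight-one vertex is created at the end, which is the only way that backward passage along the MMP can enlarge a maximal $(-2)$-chain beyond what was already present (up to the modifications at the ends of the chain coming from changes of the weights of neighbors of the contracted vertex $\widetilde{C}$). Accounting for the at most $B+3$ such events and the bounded number of non-$(-2)$ weights produced by the $\epsilon$-klt condition yields the required uniform bound on $(-2)$-chain lengths, and hence on the baskets of the singularities of $X_i$ in terms of $R$ and $\epsilon$.
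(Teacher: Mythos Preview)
Your overall induction along the MMP, starting from the bounded minimal family and using Claims~\ref{bound} and~\ref{nondec}, matches the paper's approach. The gap is in your combinatorial endgame. You assert that blowing up a weight-one degree-one vertex is, up to edge effects, ``the only way that backward passage along the MMP can enlarge a maximal $(-2)$-chain,'' and you then bound the number of such events by $B+3$ via Claim~\ref{nondec}. But Claim~\ref{nondec} also records that blowing up an \emph{edge} incident to the current weight-one vertex leaves $\mu(g_i)$ unchanged, and each such edge blow-up turns the old weight-one vertex into a weight-two vertex adjacent to the new $(-1)$-curve. A long run of edge blow-ups therefore produces an arbitrarily long new $(-2)$-chain while $\mu$ does not move at all, so the bound $\mu\le B$ says nothing about these chains. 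For the same reason, the identity $\mu(f)=\nu(f)-E_{x_0}^2+\sum_j E_{x_j}^2$ only controls the \emph{difference} $\nu(f)-\sum_j|E_{x_j}^2|$; it does not, as you claim, constrain $\nu(f)$ or $\sum_j|E_{x_j}^2|$ separately, since an unbounded number of edge blow-ups can inflate both simultaneously.

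The paper closes this gap with an independent argument for the edge case: if a run of $l$ successive edge blow-ups creates a $(-2)$-chain of length $l$, then blowing the $(-1)$-curve back down $l$ times decreases by one at each step the weight of the fixed neighbouring vertex $v_1$ on the other side; since weights stay positive along this process, $w(v_1)>l$, and the $\epsilon$-klt bound $w(v)\le 2/\epsilon$ from~\cite[Lemma 5.20]{Ale94} gives $l<2/\epsilon$. The paper also does the bookkeeping for what happens when the sequence of $h_i$'s switches between vertex and edge blow-ups (and handles the platonic case), showing that such a switch can add at most one further $(-2)$-vertex before breaking the chain. You would need to supply both of these pieces to complete your argument.
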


\begin{proof}[Proof of the Claim~\ref{bound}]
Let $f\colon (X,D)\rightarrow (X',D')$ be a birational contraction
of the MMP of a pair in $\mathcal{F}(R,\epsilon)$.
Recall that by Lemma~\ref{diff} we have that
\[
\Ch(f) = \mu(f) + \delta(f) + M(f) + \gamma(f),
\]
and 
\[ 
\Ch(f) = \Ch(X,D) - \Ch(X',D') \leq \Ch(X,D) \leq R
\] 
by the positivity of $\Ch(X',D')$ 
and definition of $\mathcal{F}(R,\epsilon)$.
Therefore, it is enough to find lower bounds for 
$\delta(f)$, $M(f)$ and $\gamma(f)$. 
Since
\[
M(f) = 4\left( \frac{m(f)-k+1}{m(f)} \right) +c(f)\left( \frac{1-m(f)^2}{m(f)^2}\right),
\]
we will proceed by providing bounds for $c(f)$ and $k$:
\begin{itemize}
\item Bounding $c(f)$: 
If $\nu(f)>0$ we observe that the strict transform of $C$ in $\widetilde{X}$
is the unique $(-1)$-curve in the exceptional locus of $\widetilde{X}\rightarrow \widetilde{X'}$,
so $c(f)=1$. 
Otherwise $c(f)$ is the weight of a vertex of the graph of $x_0$, 
and therefore it does not exceed $\frac{2}{\epsilon}$ (see ~\cite[Lemma 5.20]{Ale94}).
We conclude that in any case $1 \leq c(f)\leq \frac{2}{\epsilon}$.
\item Bounding $k$: 
Observe that $k\leq 3$. 
Indeed, the vertex corresponding to $\widetilde{C}$ 
in the graph obtained by blowing up the graph of $x_0$ 
has degree at most $3$ by the classification of klt singularities ~\ref{class}.
If $k\leq 1$, then we can increase $k$ by picking smooth points in $C$.
So, without loss of generality we can assume $k \in \{2,3\}$.
\end{itemize}
Since $m(f)\in \mathbb{Z}_{\geq 1}$ we conclude that
\[
M(f) \geq  -4 - \frac{2}{\epsilon}.
\]
Finally, observe that by the definitions of $\delta(x)$ 
and $\gamma(x)$ for a single singularity $x$ of $(X,D)$ we have the following inequalities
\[
5\geq \delta(x) \geq -6 
\quad
\text{ and }
\quad
0\geq \gamma(x)\geq -2,
\]
and since $k\in \{2,3\}$ 
then we obtain bounds for $\delta(f)$ and $\gamma(f)$. 
Putting all these bounds together we conclude that
it is enough to take 
\[
B=R+31+\frac{2}{\epsilon} \geq \mu(f).
\]
\end{proof}

\begin{proof}[Proof of the Claim~\ref{nondec}]
First we analyze the value of 
\[
E_{x_1}^2+E_{x_2}^2-E_{x_0}^2 
\] 
after the first smooth
blow up $h_1$ in the graph $G(x)$ of a klt singularity $x\in \widetilde{X}'$.
We proceed in two cases depending whether the first blow up $h_1$
is at a vertex or an edge: 
\begin{itemize}
\item We blow up an edge $e$ joining vertices $v_1$ and $v_2$
with $w(v_1),w(v_2)\geq 2$:\\
In this case $w(v_1)$ and $w(v_2)$ increase by one and
we can compute 
\[ \mu(g_1) =  (E_{x_1}^2+E_{x_2}^2-E_{x_0}^2) + 1 = -3. \]
\item We blow up a vertex $v$ with weight $w(v)\geq 2$:\\
In this case we increase the weight of $v$ by one 
and introduce a new vertex of weight one which corresponds
to the exceptional curve of the blow up.
We let $x_2$ be any smooth point in such curve
so we obtain
\[ \mu(g_1) = (E_{x_1}^2+E_{x_2}^2-E_{x_0}^2) + 1 =2. \]
\end{itemize}
Now we turn to prove that after the first blow up $h_1$,
the number $\mu(g_i)$ can not decrease, when $i$ increases in $\{1,\dots,\nu(f) \}$.
Recall that since $\widetilde{X}$ contains
at most one $(-1)$-curve which is exceptional over $\widetilde{X'}$
then any further blow up $h_i$, with $i\in \{2, \dots, \nu(f)\}$, 
is performed at a vertex of weight one or at one of its edges.
Moreover, since the vertex of weight one introduced in the first blow up
has degree at most two, we only have to analyze four different cases.
Again, we proceed case by case:
\begin{itemize}
\item We blow up a vertex of weight one and degree one:\\
In this case $-E_{x_0}^2, E_{x_1}^2$ and $E_{x_2}^2$ remain
the same after the blow up $h_{i+1}$, so that $\mu(g_{i+1})=\mu(g_i)+1$.
Indeed, the singularities of $x_0$ and $x_2$ do not change,
while the graph of $x_1$ is enlarged with an edge and a vertex of weight $2$.
\item We blow up a vertex of weight one and degree two:\\
In this case the values $-E_{x_0}^2$ 
and $E_{x_1}^2+E_{x_2}^2$ remain the same
after the blow up $h_{i+1}$, so that $\mu(g_{i+1})=\mu(g_i)+1$. 
Indeed, the graph of $x_1$ is enlarged with 
two edges, a vertex of weight $2$ and the graph of $x_2$, 
while the new point $x_2$ corresponds to a smooth point in
the exceptional curve.
\item We blow up an edge of a vertex of weight one and degree one:\\
In this case, after the blow-up $h_{i+1}$, the singularity $x_0$ does not change, $E_{x_1}^2$ decreases by one,
$E_{x_2}^2$ remains the same, so that $\mu(g_{i+1})=\mu(g_i)$.
\item We blow up an edge of a vertex of weight one and degree two:\\ 
In this case, after the blow-up $h_{i+1}$, the singularity $x_0$ does not change, and up to permuting $x_1$ and $x_2$
we can assume that $E_{x_1}^2$ remains the same and $E_{x_2}^2$ decreases by one.
Thus, we have that $\mu(g_{i+1})=\mu(g_i)$.
\end{itemize}
Finally, observe that after any such blow up,
the vertex of weight one corresponding to the exceptional curve
will have degree at most two as well, 
so these four cases are the only ones that can happen 
in the sequence of blow ups that factors $\widetilde{f}$.
\end{proof}

\begin{proof}[Proof of the Claim~\ref{baskets}]
Observe that since $\mathcal{F}_{\rm min}(R,\epsilon)$
is a bounded family then there exists $L_0$
such that the baskets of singularities appearing in this family
belong to $\mathcal{S}(L_0,\epsilon)$. 
Indeed, it is enough to take $L_0$ to be the maximum 
length of the subgraph of rational $(-2)$-curves  
in the minimal resolution of the singularities which appear on $\mathcal{F}_{\rm min}(R,\epsilon)$.
Now we turn to prove that it is enough to take
\[
L = \max \left\{ L_0 , R+36+\frac{2}{\epsilon} \right\}. 
\]
Let $f$ be a contraction in the minimal model program
of a pair in $\mathcal{F}(V,\epsilon)$, 
we will prove that if the singularity $x_0$ belongs
to $\mathcal{S}(L, \epsilon)$, then the singularities
$x_1,\dots, x_k$ belong to $\mathcal{S}(L,\epsilon)$ as well.
If $\nu(f)=0$ then the statement holds, since
the graph of any point $x_1,\dots,x_k$ is a subgraph of 
the graph of $x_0$.
Moreover, if a chain of $(-2)$-curves in the graph of $x_1$ is already a chain
of $(-2)$-curves in the graph of $x_0$ then the length 
of such chain is at most $L$ by definition, 
so it is enough to consider chains of $(-2)$-curves in the 
graph of $x_1$ which are obtained by $\widetilde{f}$.

From now, we assume that $\nu(f)>0$.
Observe that in this case, 
after the first blow up $h_1$ of the graph of $x_0$,
the $(-1)$-curve that we introduce intersects curves of
negative self-intersection at least three, so we conclude that 
the connected chains of $(-2)$-curves that we introduce by $\widetilde{f}$
can not intersect the chains of $(-2)$-curves in the graph of $x_0$. 
Here, we are using the fact that for platonic singularities, the central component of the fork
has self-intersection $-b$, with $b\geq 2$.
Now, we analyze what happens to the chains of $(-2)$-curves
in the blow up of the graph of $x_0$ when we blow up
an edge after a vertex, or vice versa, in the factorization of $\widetilde{f}$:
\begin{itemize}
\item The factorization of $\widetilde{f}$ into blow ups of smooth centers contains a blow 
up of an edge followed by a blow up of a vertex:\\
In this case, we blow up an edge $e$ joining two vertices $v_1$ and $v_2$, 
and we introduce a vertex $v_0$ of weight one. 
Since $\widetilde{X}$ contains only one $(-1)$-curve exceptional over $\widetilde{X'}$
we conclude that the next blow up is at $v_0$, 
increasing the weight of $v_0$ to two and introducing a vertex $v$.
In this case we have three possible sub-cases:
\begin{itemize}
\item If there are no further blow ups, 
we conclude that any chain of $(-2)$-curves obtained before 
the last blow up can increase at most by one vertex due to $v_0$. 
\item If the next blow up is at an edge, 
it has to be at the edge joining $v$ and $v_0$, 
in this case the weight of $v_0$ increases to three
and then any $(-2)$-curve that we introduce after this last blow up
will be disjoint from the previous chains of $(-2)$-curves.
\item If the next blow up is at a vertex 
it has to be at $v$.
In this case we can assume without loss of generality that
$w(v_1)\geq 3$, since before the blow up at $e$
one of the vertices $v_1$ and $v_2$ have weight at least two. 
Observe that in this case, after the blow up at $v$, 
the vertex $v_0$ has degree three and then using the classification
of platonic singularities~\ref{class} we conclude that the only possibly
unbounded branch is the one at $v_1$.
Moreover, the complement of the branch at $v_1$ in this graph
has at most seven vertices.
Thus, we conclude that any chain of $(-2)$-curves of length 
more than seven was already contained in the branch of $v_1$ before the blow up at $e$. 
\end{itemize}
\item The factorization of $\widetilde{f}$ into blow ups of smooth centers contains a blow
up of a vertex followed by a blow up of an edge:\\ 
Since $\widetilde{X}$ contains at most one $(-1)$-curve which is exceptional over
$\widetilde{X'}$ we see that after blowing up a vertex $v$ of $G(x)$ 
the only edge that we can blow up is the edge introduced by the preceding blow up.
In this case, after blowing up the edge we increase the weight of the 
vertex $v$ by at least two, so we discontinue any chain of $(-2)$-curves previously
introduced.
\end{itemize}
Thereof, we conclude that whenever we shift from blowing up 
vertices to edges, or vice versa, we end any previous chain
of $(-2)$-curves after possibly adding a last $(-2)$-curve.
Finally, we argue that the chains of $(-2)$-curves produced by blowing up
a sequence of edges or a sequence of vertices have length bounded by $L-1$:
\begin{itemize}
\item The chain of $(-2)$-curves is obtained by blowing up a sequence of edges:\\
Assume that after a sequence of blow ups of edges we obtain a chain of 
$(-2)$-curves of length $l$. 
Since the last blow up of this sequence is at an edge,
then the last exceptional curve correspond to a vertex $v_0$ which is
joined to the chain of $(-2)$-curves and another vertex $v_1$. 
By succesively blowing down the $(-1)$-curve,
at each step we decrease the weight of $v_1$ 
and the length $l$ by one.
Recall that every graph obtained by blowing up graphs of klt singularities
has positive weights, so we conclude that $w(v_1)>l$.
Moreover, we know that all the weights of graphs of $\epsilon$-klt singularities
are $\leq \frac{2}{\epsilon}$ (see ~\cite[Lemma 5.20]{Ale94}) 
concluding that $\frac{2}{\epsilon}>l$.
\item The chain of $(-2)$-curves is obtained by blowing up a sequence of vertices:\\
Assume that after a sequence of blow ups of vertices we obtain 
a chain of $(-2)$-curves of length $l$. 
Recall that after blowing up a vertex $v_0$
we introduce a new vertex of weight and degree one, 
then we deduce that at least $l-1$ of the blow ups were 
at vertices of degree one and weight one.
Thus, by Claim~\ref{nondec} we conclude that $\mu(f)+3\geq l-1$
and by Claim~\ref{bound} we see that  $B+4 \geq l$.
\end{itemize}
By the last two cases we conclude that any chain of $(-2)$-curves 
obtained by blowing up a sequence of edges or a sequence of vertices has length at most $L-1$,
and using the first two cases we see that any chain has length at most $L$.
Thereof we conclude that the graph of $x_1$ contains chains of $(-2)$-curves
of length at most $L$ as claimed. 
\end{proof}

Now we return to prove that such $s>0$ exists: 
By Claim~\ref{baskets}, Lemma~\ref{fin}, and the bound $k\leq 3$, the set
\[
\{ \delta(f) \mid \text{ $f$ is a divisorial contraction of the MMP of a pair in $\mathcal{F}(R,\epsilon)$}\},
\]
is finite.
Moreover, by Theorem~\ref{existence}
any pair which appears in the minimal model program of
an element in $\mathcal{F}(R,\epsilon)$
has $\epsilon$-klt singularities, 
so that $\frac{1}{\epsilon} \geq m(f) \geq 0$
and $\frac{2}{\epsilon} \geq c(f) \geq 1$.
Thus, we conclude that the sets
\[
\{ M(f) \mid \text{ $f$ is a divisorial contraction of the MMP of a pair in $\mathcal{F}(R,\epsilon)$}\}
\]
and 
\[
\{ \gamma(f) \mid \text{ $f$ is a divisorial contraction of the MMP of a pair in $\mathcal{F}(R,\epsilon)$}\}
\]
are finite as well.
Since $\mu(f)$ is an integer number, 
we conclude by Lemma~\ref{diff} that $\Ch(f)$ belongs to $\zz[\frac{1}{N}]$ 
for some integer number $N$, depending only on $R$ and $\epsilon$.
Finally, considering that $\Ch(f)$ is strictly positive we conclude that
there exists $s>0$, only depending on $R$ and $\epsilon$, 
such that $\Ch(f)\geq s$.\\

Now, we can finish the proof of the theorem.
The MMP of any pair in $\mathcal{F}(R,\epsilon)$
has at most $\lceil R/s \rceil$ steps,
and in any step we introduce at most three singular points,
all of them contained in the finite set of baskets $\mathcal{S}(L,\epsilon)$.
Using ~\cite[Lemma 3.8]{Ale94} we see that
the set $\mathcal{F}_{\rm sm}(R,\epsilon)$ 
of minimal log resolutions of models in $\mathcal{F}(R,\epsilon)$ forms a bounded
family. 
Therefore, we can choose a bounding $\qq$-polarization $\mathcal{A}_0$ for the class $\mathcal{F}_{\rm sm}(R,\epsilon)$,
in the sense of~\ref{qpolarization}. Then, we have an induced $\qq$-polarization $\mathcal{A}$ induced
by pushing-forward the $\qq$-Cartier $\qq$-divisors of $\mathcal{A}_0$ to the log pairs $(X,D)\in \mathcal{F}(R,\epsilon)$.
Since the $\qq$-Cartier $\qq$-divisors of $\mathcal{A}_0$ have bounded Cartier index
and the log pairs of $\mathcal{F}(R,\epsilon)$ have $\epsilon$-klt singularities, we conclude that
the $\qq$-Cartier $\qq$-divisors of $\mathcal{A}$ have bounded Cartier index as well.
Finally, we need to check the second and third conditions of~\ref{qpolarization} for the $\qq$-polarization $\mathcal{A}$.

Let $(X,D) \in \mathcal{F}(R,\epsilon)$ and $A_X$ the corresponding ample $\qq$-divisor on $X$, 
then the minimal resolution $\pi \colon (X_0, D_0+E_0)\rightarrow (X,D)$ of $(X,D)$ belongs to $\mathcal{F}_{\rm sm}(R,\epsilon)$,
where $E_0$ is the $\pi$-exceptional effective divisor such that $\pi^*(K_X+D)=K_{X_0}+E_0+D_0$.
Then, by the negativity lemma we can write $\pi^*(A_X)=A_{X_0}+E_{X_0}$, 
where $A_{X_0}$ is the ample $\qq$-divisor on $X_0$ induced by the bounding $\qq$-polarization $\mathcal{A}_0$
and $E_{X_0}$ is an effective $\pi$-exceptional divisor whose intersection matrix is negative definite.
By the projection formula and the fact that $\mathcal{A}_0$ is a bounding $\qq$-polarization, we have that the value
\[
A_X^2 = (A_{X_0}+E'_{X_0})^2 \leq A_{X_0}^2 + A_{X_0}\cdot E_{X_0} \leq A_{X_0}^2+ A_{X_0}\cdot (E_{X_0})_{\rm red},
\]
is bounded independent of $(X,D)\in \mathcal{F}(R,\epsilon)$.
Analogously, we have that the value
\[
A_X\cdot D = (A_{X_0}+E_{X_0})\cdot D_0 \leq A_{X_0}\cdot D_0 + (E_{X_0})_{\rm red}\cdot (D_0)_{\rm red},
\]
is bounded independent of $(X,D)\in \mathcal{F}(R,\epsilon)$.

\end{proof} 

\begin{bibdiv}
\begin{biblist}

\bib{Flips}{collection}{
   title={Flips and abundance for algebraic threefolds},
   note={Papers from the Second Summer Seminar on Algebraic Geometry held at
   the University of Utah, Salt Lake City, Utah, August 1991;
   Ast\'erisque No. 211 (1992) (1992)},
   publisher={Soci\'et\'e Math\'ematique de France, Paris},
   date={1992},
   pages={1--258},
   issn={0303-1179},
   review={\MR{1225842}},
}

\bib{Ale93}{article}{
   author={Alexeev, Valery},
   title={Two two-dimensional terminations},
   journal={Duke Math. J.},
   volume={69},
   date={1993},
   number={3},
   pages={527--545},
   issn={0012-7094},
   review={\MR{1208810}},
   doi={10.1215/S0012-7094-93-06922-0},
}

\bib{Ale94}{article}{
   author={Alexeev, Valery},
   title={Boundedness and $K^2$ for log surfaces},
   journal={Internat. J. Math.},
   volume={5},
   date={1994},
   number={6},
   pages={779--810},
   issn={0129-167X},
   review={\MR{1298994}},
   doi={10.1142/S0129167X94000395},
}

\bib{AM00}{article}{
   author={Alexeev, Valery},
   author={Mori, Shigefumi},
   title={Bounding singular surfaces of general type},
   conference={
      title={Algebra, arithmetic and geometry with applications},
      address={West Lafayette, IN},
      date={2000},
   },
   book={
      publisher={Springer, Berlin},
   },
   date={2004},
   pages={143--174},
   review={\MR{2037085}},
}

\bib{BCHM}{article}{
   author={Birkar, Caucher},
   author={Cascini, Paolo},
   author={Hacon, Christopher D.},
   author={McKernan, James},
   title={Existence of minimal models for varieties of log general type},
   journal={J. Amer. Math. Soc.},
   volume={23},
   date={2010},
   number={2},
   pages={405--468},
   issn={0894-0347},
   review={\MR{2601039}},
   doi={10.1090/S0894-0347-09-00649-3},
}

\bib{Bog78}{article}{
   author={Bogomolov, F. A.},
   title={Holomorphic tensors and vector bundles on projective manifolds},
   language={Russian},
   journal={Izv. Akad. Nauk SSSR Ser. Mat.},
   volume={42},
   date={1978},
   number={6},
   pages={1227--1287, 1439},
   issn={0373-2436},
   review={\MR{522939}},
}

\bib{Har77}{book}{
   author={Hartshorne, Robin},
   title={Algebraic geometry},
   note={Graduate Texts in Mathematics, No. 52},
   publisher={Springer-Verlag, New York-Heidelberg},
   date={1977},
   pages={xvi+496},
   isbn={0-387-90244-9},
   review={\MR{0463157}},
}

\bib{HMX14}{article}{
 author={Hacon, Christopher},
 author={McKernan, James},
 author={Xu, Chenyang},
  title={Boundedness of moduli of varieties of general type},
 JOURNAL = {https://arxiv.org/abs/1412.1186 }
      YEAR = {2014},
}

\bib{KM98}{book}{
   author={Koll\'ar, J\'anos},
   author={Mori, Shigefumi},
   title={Birational geometry of algebraic varieties},
   series={Cambridge Tracts in Mathematics},
   volume={134},
   note={With the collaboration of C. H. Clemens and A. Corti;
   Translated from the 1998 Japanese original},
   publisher={Cambridge University Press, Cambridge},
   date={1998},
   pages={viii+254},
   isbn={0-521-63277-3},
   review={\MR{1658959}},
   doi={10.1017/CBO9780511662560},
}

\bib{Koll13}{book}{
   author={Koll\'ar, J\'anos},
   title={Singularities of the minimal model program},
   series={Cambridge Tracts in Mathematics},
   volume={200},
   note={With a collaboration of S\'andor Kov\'acs},
   publisher={Cambridge University Press, Cambridge},
   date={2013},
   pages={x+370},
   isbn={978-1-107-03534-8},
   review={\MR{3057950}},
   doi={10.1017/CBO9781139547895},
}

\bib{Kob90}{article}{
   author={Kobayashi, Ryoichi},
   title={Uniformization of complex surfaces},
   conference={
      title={K\"ahler metric and moduli spaces},
   },
   book={
      series={Adv. Stud. Pure Math.},
      volume={18},
      publisher={Academic Press, Boston, MA},
   },
   date={1990},
   pages={313--394},
   review={\MR{1145252}},
}

\bib{Lan00}{article}{
   author={Langer, Adrian},
   title={Chern classes of reflexive sheaves on normal surfaces},
   journal={Math. Z.},
   volume={235},
   date={2000},
   number={3},
   pages={591--614},
   issn={0025-5874},
   review={\MR{1800214}},
   doi={10.1007/s002090000149},
}

\bib{Lan01}{article}{
   author={Langer, Adrian},
   title={The Bogomolov-Miyaoka-Yau inequality for log canonical surfaces},
   journal={J. London Math. Soc. (2)},
   volume={64},
   date={2001},
   number={2},
   pages={327--343},
   issn={0024-6107},
   review={\MR{1853454}},
   doi={10.1112/S0024610701002320},
}

\bib{Lan16}{article}{
   author={Langer, Adrian},
   title={The Bogomolov-Miyaoka-Yau inequality for logarithmic surfaces in
   positive characteristic},
   journal={Duke Math. J.},
   volume={165},
   date={2016},
   number={14},
   pages={2737--2769},
   issn={0012-7094},
   review={\MR{3551772}},
   doi={10.1215/00127094-3627203},
}

\bib{Meg99}{article}{
   author={Megyesi, G.},
   title={Generalisation of the Bogomolov-Miyaoka-Yau inequality to singular
   surfaces},
   journal={Proc. London Math. Soc. (3)},
   volume={78},
   date={1999},
   number={2},
   pages={241--282},
   issn={0024-6115},
   review={\MR{1665244}},
   doi={10.1112/S0024611599001719},
}

\bib{Miy77}{article}{
   author={Miyaoka, Yoichi},
   title={On the Chern numbers of surfaces of general type},
   journal={Invent. Math.},
   volume={42},
   date={1977},
   pages={225--237},
   issn={0020-9910},
   review={\MR{0460343}},
   doi={10.1007/BF01389789},
}

\bib{Miy84}{article}{
   author={Miyaoka, Yoichi},
   title={The maximal number of quotient singularities on surfaces with
   given numerical invariants},
   journal={Math. Ann.},
   volume={268},
   date={1984},
   number={2},
   pages={159--171},
   issn={0025-5831},
   review={\MR{744605}},
   doi={10.1007/BF01456083},
}

\bib{Sak80}{article}{
   author={Sakai, Fumio},
   title={Semistable curves on algebraic surfaces and logarithmic
   pluricanonical maps},
   journal={Math. Ann.},
   volume={254},
   date={1980},
   number={2},
   pages={89--120},
   issn={0025-5831},
   review={\MR{597076}},
   doi={10.1007/BF01467073},
}

\bib{Tsu83}{article}{
   author={Tsunoda, Shuichiro},
   title={Structure of open algebraic surfaces. I},
   journal={J. Math. Kyoto Univ.},
   volume={23},
   date={1983},
   number={1},
   pages={95--125},
   issn={0023-608X},
   review={\MR{692732}},
}

\bib{Wahl93}{article}{
   author={Wahl, Jonathan},
   title={Second Chern class and Riemann-Roch for vector bundles on
   resolutions of surface singularities},
   journal={Math. Ann.},
   volume={295},
   date={1993},
   number={1},
   pages={81--110},
   issn={0025-5831},
   review={\MR{1198843}},
   doi={10.1007/BF01444878},
}

\bib{Yau77}{article}{
   author={Yau, Shing Tung},
   title={Calabi's conjecture and some new results in algebraic geometry},
   journal={Proc. Nat. Acad. Sci. U.S.A.},
   volume={74},
   date={1977},
   number={5},
   pages={1798--1799},
   issn={0027-8424},
   review={\MR{0451180}},
}

\end{biblist}
\end{bibdiv}
\end{document}